\date{}
\begin{document}


\centerline{}

\centerline{}

\centerline {\Large{\bf Baire categories and classes of analytic functions }}

\centerline{\Large{\bf in which the
Wiman-Valiron type inequality }}

\centerline{\Large{\bf  can be almost surely improved}}

\centerline{}

\centerline{\bf {A. O. Kuryliak}}

\centerline{}

\centerline{Department of Mechanics and Mathematics,}

\centerline{Ivan Franko National University of L'viv, Ukraine}

\centerline{kurylyak88@gmail.ru}

\centerline{}

\centerline{\bf {O. B. Skaskiv}}

\centerline{}

\centerline{Department of Mechanics and Mathematics,}

\centerline{Ivan Franko National University of L'viv, Ukraine}

\centerline{matstud@franko.lviv.ua}

\centerline{}

\centerline{\bf {I. E. Chyzhykov}}

\centerline{}

\centerline{Department of Mechanics and Mathematics,}

\centerline{Ivan Franko National University of L'viv, Ukraine}

\centerline{chyzhykov@yahoo.com}

\newtheorem{Theorem}{\quad Theorem}[section]

\newtheorem{Definition}[Theorem]{\quad Definition}

\newtheorem{Corollary}[Theorem]{\quad Corollary}

\newtheorem{Lemma}[Theorem]{\quad Lemma}

\newtheorem{Example}[Theorem]{\quad Example}

\begin{abstract}
Let $f(z)=\sum_{n=0}^{+\infty} a_nz^n$\ $(z\in\mathbb{C})$\ be an analytic function in the unit disk and $f_t$ be an analytic function of the form
$f_t(z)=\sum_{n=0}^{+\infty} a_ne^{i\theta_nt}z^n,$ where $t\in\mathbb{R},$
$\theta_n\in\mathbb{N},$ and $h$ be a positive continuous function on $(0, 1)$  increasing
to $+\infty$ and such that $
\int_{r_0}^1h(r)dr=+\infty, r_0\in(0,1).$\ If the sequence
$(\theta_n)_{n\geq0}$ satisfies the inequality
$$
\varlimsup_{n\to+\infty}\frac1{\ln n}\ln\frac{\theta_n}{\theta_{n+1}-\theta_n}\leq\delta\in[0,1/2),
$$
then for all analytic functions $f_t$ almost surely for $t$ there exists a set $E=E(\delta,t)\subset(0,1)$
such that $\int_Eh(r)dr<+\infty$ and
$$
\varlimsup_{\begin{substack} {r\to1-0 \\ r\notin E}\end{substack}} \frac{\ln M_f(r,t)-\ln\mu_f(r)}{2\ln h(r)+\ln\ln\{h(r)\mu_f(r)\}}\leq\frac{1+2\delta}{4+3\delta},
$$
where $M_f(r,t)=\max\{|f_t(z)|\colon |z|=r\},$\ $\mu_f(r)=\max\{|a_n|r^n\colon n\geq 0\}$\ for $r\in[0, 1).$
\end{abstract}

{\bf Subject Classification:} 30B10, 30B20, 54E52 \\

{\bf Keywords:} random analytic functions, Wiman-Valiron's type inequality, Baire categories

\section{Introduction}

Let
 $H$ be the class of positive continuous functions on the interval $(0, 1)$ increasing to $+\infty$  and
 such that $
\int\nolimits_{r_0}^1h(r)dr=+\infty,\ r_0\in(0,1).
$

For a measurable set $E\subset (0, 1),$ the $h$-measure of $E$ is defined by
$$
h\mbox{-meas }(E)\overset{def}= \int_{E}h(r) dr,
$$
where $h\in H.$ It is clear that $ h\mbox{-meas}((0,1))=+\infty.$

Let $f$ be an analytic function  in the unit disc $\mathbb{D}=\{z\colon |z|<1\}$
of the form
\begin{equation}\label{1}
f(z)=\sum_{n=0}^{+\infty} a_nz^n.
\end{equation}
 For $r\in (0, 1)$  we denote  the maximum modulus of the function $f$ by $M_f(r)=\max\{|f(z)|\colon |z|=r\},$
 and the maximal term of the series  \eqref{1} by $\mu_f(r)=\break\max\{|a_n|r^n\colon n\geq0\}$.
Let also
\begin{gather*}
G_f(r)=\sum\limits_{n=0}^{+\infty}|a_n|r^n,\
  S_f(r)=\left(\sum_{n=0}^{+\infty}|a_n|^2r^{2n}\right)^{1/2},\\
 \Delta_{h}(r,f)=\frac{\ln M_f(r)-\ln\mu_f(r)}{2\ln h(r)+\ln_2\{h(r)\mu_f(r)\}},\\
 E(\eta,f,h)=\big\{r\in(0,1)\colon
  M_f(r)>\mu_f(r)(h^2(r)\ln\{h(r)\mu_f(r)\})^{\eta}\big\},
\end{gather*}
where $\ln_kx\overset{def}=\ln(\ln_{k-1} x)\ (k\geq 2),\
\ln_1x\overset{def}=\ln x.$

From the results proved in \cite{ko} it follows that in the case when
$h(r)=(1-r)^{-1},$ for every analytic function $f$ in $\mathbb{D}$ of the form (\ref{1}) there exists a set $E\subset (0, 1)$ of finite logarithmic measure, i.e. $ h\mbox{-meas}(E)<+\infty $, such that
\begin{equation}\label{2}
\varlimsup_{\begin{substack} {r\to1-0 \\ r\notin E}\end{substack}}\Delta_{h}(r,f)\leq\frac12.
\end{equation}

In \cite{skkur} the similar statement is proved with an arbitrary function $h\in H$ for which either $\ln h(r)=O(\ln_2 G_f(r))$ or
$\ln_2 G_f(r)=O(\ln h(r))$ $(r\to 1-0).$

In \cite{sul} it is noted that the constant $1/2$ in the inequality
\eqref{2} 
 cannot be replaced by a smaller number in general.
Indeed, if $g(z)=\sum_{n=1}^{+\infty}\exp\{\sqrt{n}\}z^n,$ then
for $ h(r)=(1-r)^{-1}$ we have
$$
\varliminf_{r\to1-0} \frac{M_g(r)}{ h(r) \mu_g(r)\ln^{1/2}\{\mu_g(r) h(r)\}}\geq C>0.
$$

In connection with this the following { question} arises naturally: {\it how can one describe the ``quantity''
of those analytic functions for which inequality~(\ref{2}) can
be improved?}

In the paper \cite{filk} it is proved that in some probability sense for ``majority'' of analytic functions
the constant $1/2$ in the inequality \eqref{2} can be replaced by $1/4.$ Similar statement is proved in \cite{skkur} in reference to the inequality
\eqref{2} with any function $h\in H$ described above.

At the same time, the classes
of random analytic functions considered in \cite{skkur}, \cite{filk} do not include all analytic functions
of the form
\begin{equation} \label{3}
f_t(z)=\sum_{n=0}^{+\infty}a_ne^{i\theta_n t}z^n,
\end{equation}
where $(\theta_n)_{n\geq0}$ is an arbitrary sequence of nonnegative integers.
Note that $f_0(z)\equiv f(z).$

 We suppose that the sequence $(\theta_n)_{n\geq0}$ satisfies the inequality
 \begin{equation}\label{4}
 \frac{\theta_{n+1}}{\theta_n}\geq q>1\ (n\geq0).
 \end{equation}
 In the case of $q\geq 2$ analytic functions  of the form  \eqref{4} satisfies the conditions of theorems from \cite{skkur}, \cite{filk} mentioned
 above.

We also remark that the possibility of  improvement of
Wiman-Valiron's inequality for entire functions of the form
(\ref{3}) was considered earlier by M.~Still \cite{st} and
P.~Filevych~\cite{filMS6} (see also \cite{skzr1}). A similar
question for the class of entire functions of two variables was
concidered in the papers \cite{skzr4}, \cite{skzr3} and
\cite{skzr2}. In  \cite{filBai}  the ``quantity''of  those entire
functions for which    classical Wiman-Valiron's inequality can be
improved, is described in the sense of Baire's categories.

Here we consider the formulated {\it question} in the class of
analytic functions in $\mathbb{D}$ of the form~(\ref{3}). The proved
theorems complement in this case theorems from \cite{filk, skkur}
and are analogues of the statements from \cite{st} and \cite{filBai}.

\section{Auxiliary lemmas}

We need Lemma 2 from \cite{st} (see also \cite{filMS6}).
\begin{Lemma}[\cite{st}] \label{l1}
If a sequence $(\theta_n)_{n\geq0}$ satisfies the condition (\ref{4}), then for all
sequences $(a_n)_{n\geq0},$ $a_n\in\mathbb{C},$ and all $\beta>0, N\geq 0$ we have
\begin{gather*}
P_0\Bigg(\Bigg\{t\in[0, 2\pi]\colon \max_{0\leq
\psi\leq2\pi}\Bigg|\sum_{k=0}^{N}
a_k e^{ik\psi}e^{i\theta_kt}\Bigg|
\geq A_{\beta q}S_N\ln^{1/2}N\Bigg\}\Bigg) \leq  \frac1{N^{\beta}},
\end{gather*}
where $A_{\beta q}$ is a constant which depends only on $\beta$ and $q,$
$S_N=\sum_{n=0}^N|a_n|^2,$ $P_0=\frac{\mathfrak{m}}{2\pi},$
$\mathfrak{m}$ is the Lebesgue measure on the real line.
\end{Lemma}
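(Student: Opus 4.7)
The plan is to combine two classical ingredients: (i) a high-moment ($L^{2p}$) inequality for exponential sums supported on the lacunary frequency set $\{\theta_k\}$, which is the only place the gap condition~(\ref{4}) is used, and (ii) Bernstein's inequality for trigonometric polynomials, used to replace the continuous supremum over $\psi$ by a maximum over $O(N)$ discrete points. For fixed $t$, the function $F(\psi,t):=\sum_{k=0}^{N} a_k e^{ik\psi}e^{i\theta_k t}$ is a trigonometric polynomial of degree $\le N$ in $\psi$, so Bernstein's inequality yields $\|\partial_\psi F(\cdot,t)\|_\infty\le N\|F(\cdot,t)\|_\infty$. Placing a uniform grid $\psi_j=2\pi j/M$ with $M=O(N)$ sufficiently large, the mean-value theorem then gives
$$\max_{0\le\psi\le 2\pi}|F(\psi,t)|\;\le\;2\max_{0\le j<M}|F(\psi_j,t)|.$$

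For each fixed $\psi_j$, set $c_k:=a_k e^{ik\psi_j}$, so that $\sum_k|c_k|^2=\sum_k|a_k|^2=:S_N^2$ (interpreting $S_N$ in the statement as the $\ell^2$-norm of $(a_k)_{k\le N}$). Expand $\bigl|\sum_k c_k e^{i\theta_k t}\bigr|^{2p}$ multinomially and integrate against $dt/(2\pi)$ on $[0,2\pi]$; only the diagonal terms with $\theta_{k_1}+\cdots+\theta_{k_p}=\theta_{l_1}+\cdots+\theta_{l_p}$ survive. The Hadamard gap $\theta_{n+1}\ge q\theta_n$ with $q>1$ forces each such equality to be essentially a matching of two multisets, and the classical Salem--Zygmund / Kahane counting argument then bounds the number of solutions by $C(q)^p p!$ per fixed $(l_1,\dots,l_p)$. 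Together this yields
$$\frac{1}{2\pi}\int_0^{2\pi}\Bigl|\sum_{k=0}^{N} c_k e^{i\theta_k t}\Bigr|^{2p}\,dt\;\le\;(B(q))^{2p}\,p^{p}\,S_N^{2p},$$
where $B(q)$ depends only on $q$.

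Combining the two steps, Chebyshev's inequality gives $P_0\{|F(\psi_j,\cdot)|\ge\lambda\}\le\lambda^{-2p}(B(q))^{2p}p^{p}S_N^{2p}$ for each $j$, and a union bound over the $M=O(N)$ grid points yields $P_0\{\max_\psi|F(\psi,\cdot)|\ge 2\lambda\}\le M\cdot\lambda^{-2p}(B(q))^{2p}p^{p}S_N^{2p}$. Choosing $p=\lceil(\beta+1)\ln N\rceil$ and $\lambda=A_{\beta q}S_N\sqrt{\ln N}$ with $A_{\beta q}$ sufficiently large in terms of $\beta$ and $q$ makes the right-hand side at most $N^{-\beta}$; after absorbing the factor $2$ from Step~1 into the constant, this is the claim. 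The decisive and difficult step is the lacunary $L^{2p}$ bound: condition~(\ref{4}) is exactly what is needed to estimate the number of solutions of $\sum_i\theta_{k_i}=\sum_i\theta_{l_i}$ by $C(q)^p p!$ rather than by the trivial $N^{2p}$, and it is this combinatorial miracle that upgrades the deterministic worst-case scale $S_N\sqrt{N}$ to the sub-Gaussian scale $S_N\sqrt{\ln N}$ appearing in the conclusion.
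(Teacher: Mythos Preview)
The paper does not give its own proof of this lemma: it is quoted verbatim from Steele~\cite{st} (with a cross-reference to~\cite{filMS6}) and used as a black box. So there is nothing in the present paper to compare your argument against.

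That said, your outline is correct and is precisely the classical Salem--Zygmund/Steele route: (i) the Hadamard gap condition~(\ref{4}) yields the $L^{2p}$ bound $\int_0^{2\pi}|\sum c_k e^{i\theta_k t}|^{2p}\,\frac{dt}{2\pi}\le (B(q))^{2p}p^{p}S_N^{2p}$ via the combinatorics of $\sum\theta_{k_i}=\sum\theta_{l_i}$; (ii) Bernstein's inequality in $\psi$ discretises the supremum to $M=O(N)$ points; (iii) Chebyshev plus a union bound and the choice $p\asymp(\beta+1)\ln N$ close the argument. Your parameter count is right: with $\lambda=A_{\beta q}S_N\sqrt{\ln N}$ the factors $(\ln N)^{-p}$ and $p^{p}$ cancel, leaving $CN\cdot(B^2(\beta+1)/A_{\beta q}^2)^{p}$, which is $\le N^{-\beta}$ once $A_{\beta q}$ is large enough in terms of $\beta$ and $q$. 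You also correctly note that $S_N$ must be read as the $\ell^2$-norm $(\sum_{n\le N}|a_n|^2)^{1/2}$, not the sum of squares as literally typeset; otherwise the inequality is dimensionally inconsistent and the later use of $S^2_{[C_1(r)]}\le M_f(r)\mu_f(r)$ in the proof of Theorem~\ref{t1} would not match. This is the argument in~\cite{st}, so your proof aligns with the cited source even though the present paper omits it.
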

\begin{Lemma}[\cite{filk}] \label{l2}
Let $k(r)$ be a continuous increasing to $+\infty$ function on $(0,1)$ ,
 $E\subset(0,1)$ be an open set such that there exists a sequence
$0<p_{1}\le ...\break \le p_{n}\to 1$ $(n\to+\infty)$ outside $E.$ Then
there exists a sequence $0<r_{1}\le ...\break\le r_{n}\to 1$
$(n\to+\infty)$ such that for all  $n\in \mathbb N$
\begin{itemize}
\item[1)] $r_{n}\notin E,$

\item[2)] $\ln k(r_{n})\ge\frac{n}{2},$

\item[3)] if $(r_{n};r_{n+1})\cap E\not=(r_{n},r_{n+1}),$ then $k(r_{n+1})\le e
k(r_{n}).$
\end{itemize}
\end{Lemma}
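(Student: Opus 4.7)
The plan is to construct $\{r_n\}$ by induction. For the base step, I would choose $r_1\in E^c$ with $\ln k(r_1)\ge 1/2$; this exists because $k(p_m)\to+\infty$ along the sequence $p_m\in E^c$ given in the hypothesis.

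For the inductive step, given $r_n\in E^c$ with $\ln k(r_n)\ge n/2$, set $\rho_n=k^{-1}(e\,k(r_n))$ and $u_n=k^{-1}(e^{(n+1)/2})$, which are well defined by continuity and strict monotonicity of $k$. I would inspect $E^c\cap(r_n,\rho_n]$. If it contains a point $q$ with $k(q)\ge e^{(n+1)/2}$, take $r_{n+1}=q$: the bounds $e^{(n+1)/2}\le k(r_{n+1})\le e\,k(r_n)$ yield both conditions 2 and 3 directly. If $E^c\cap(r_n,\rho_n]=\emptyset$, then $(r_n,\rho_n]$ lies in an open $E$-component with left endpoint $r_n$, say $(r_n,\beta)$ with $\beta\in E^c$ (the latter because $p_m\to 1$ outside $E$); setting $r_{n+1}=\beta$ makes $(r_n,r_{n+1})\subset E$, so condition 3 is vacuous, while $k(\beta)>e\,k(r_n)\ge e^{(n+1)/2}$ delivers condition 2.

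The hard part is the remaining case: $E^c\cap(r_n,\rho_n]\ne\emptyset$ but every point in it has $k<e^{(n+1)/2}$. Let $\alpha$ be the largest such point; then $\alpha\in E^c\cap(r_n,u_n)$ and $(\alpha,\rho_n]$ lies in an $E$-component $(\alpha,\beta)$ with $k(\beta)>e\,k(r_n)$. A naive choice $r_{n+1}=\beta$ would violate condition 3 because $\alpha\in E^c\cap(r_n,\beta)$. My resolution is to retroactively replace $r_n$ by $\alpha$: conditions 1 and 2 at index $n$ still hold since $\alpha\in E^c$ and $k(\alpha)>k(r_n)\ge e^{n/2}$, while condition 3 between $r_{n-1}$ and the new $r_n=\alpha$ reduces to $k(\alpha)<e^{(n+1)/2}\le e\cdot e^{(n-1)/2}\le e\,k(r_{n-1})$, which follows from the inductive bound at index $n-1$. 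After the replacement, $(r_n,\beta)=(\alpha,\beta)\subset E$, so $r_{n+1}=\beta$ again works with condition 3 vacuous. Each inductive step involves at most one such localized retroactive modification, so the construction yields a non-decreasing sequence in $E^c$; convergence $r_n\to 1$ then follows from $k(r_n)\ge e^{n/2}\to+\infty$ together with $k$ strictly increasing to $+\infty$ on $(0,1)$.
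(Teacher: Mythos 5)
Your inductive construction is correct, and the ``retroactive modification'' is exactly the right device for the one genuinely delicate case. Let me record the key checks that make it airtight. In Case~C you pick $\alpha$ as the maximum of the compact set $E^c\cap[r_n,\rho_n]$ (it exceeds $r_n$ precisely because we are not in Case~B), and the Case~C hypothesis gives $k(\alpha)<e^{(n+1)/2}$. After replacing $r_n$ by $\alpha$, conditions 1) and 2) at index $n$ are immediate, and for condition 3) between $r_{n-1}$ and the new $r_n=\alpha$ you need $k(\alpha)\le e\,k(r_{n-1})$ when $(r_{n-1},\alpha)\not\subset E$ (which indeed happens, since the old $r_n$ sits in that interval and lies in $E^c$); this follows from $k(\alpha)<e^{(n+1)/2}=e\cdot e^{(n-1)/2}\le e\,k(r_{n-1})$, exactly as you wrote. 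Then $(r_n,r_{n+1})=(\alpha,\beta)\subset E$ makes condition~3) at index $n$ vacuous, while $k(\beta)>e\,k(r_n^{\mathrm{old}})\ge e^{(n+2)/2}>e^{(n+1)/2}$ gives condition~2) at index $n+1$. The modification is localized: it affects only $r_n$ and the conditions adjacent to it, so no cascading re-verification is needed, and since $\alpha<\rho_n<\beta$ the monotonicity $r_{n-1}<r_n<r_{n+1}$ is preserved. Finally $\beta<1$ because otherwise $(\alpha,1)\subset E$ contradicts the existence of $p_m\to1$ in $E^c$, and $r_n\to1$ follows from $k(r_n)\ge e^{n/2}\to+\infty$. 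Two points worth flagging but not errors: you implicitly assume $k$ is strictly increasing so that $k^{-1}$ is well defined (if $k$ is only non-decreasing, replace $k^{-1}(y)$ by $\sup\{r:k(r)\le y\}$ and the argument carries over), and in Case~A one should note $k(q)\le e\,k(r_n)$, which holds automatically since $q\le\rho_n$. The paper cites this lemma from an external source without reproducing a proof, so I cannot compare your route with the original, but your greedy construction with exponential $k$-steps and $E$-component jumps, stabilized by a one-step backward correction, is a complete and valid proof.
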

\begin{Lemma}[\cite{skkur}] \label{l3}
Let $\varphi_1(x)$ and $\varphi_2(x)$ be  positive continuous
increasing to $+\infty$ functions on $[0,+\infty)$ such that
$\int_0^{+\infty}\frac{dx}{\varphi_i(x)}<+\infty$\ $(i\in\{1,
2\}),$ $h\in H$ and $g_1(x)=\ln G_f(e^x)$\ $(x<0).$ Then there
exists a set $E\subset(0,1)$ such that $h\mbox{-meas}(E)<+\infty$
and for all $r\in(0,1)\backslash
 E$ we get
$$
g''_1(\ln r)\leq h(r)\varphi_2(h(r)\varphi_1(g_1(\ln r))).
$$
\end{Lemma}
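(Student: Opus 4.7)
The plan is to derive the bound on $g_1''(\ln r)$ via two successive applications of the Borel--Nevanlinna exceptional-set technique, adapted so that the exceptional set has finite $h$-measure rather than finite Lebesgue measure. A first round controls $g_1'$ in terms of $g_1$ through $\varphi_1$, and a second round controls $g_1''$ in terms of $g_1'$ through $\varphi_2$; composing the two bounds gives precisely the stated inequality. The key structural fact I would record at the outset is that, because $G_f(r)=\sum|a_n|r^n$ has non-negative coefficients, $\ln G_f$ is convex in $\ln r$, so $g_1(x)=\ln G_f(e^x)$ is convex and non-decreasing on $(-\infty,0)$; in particular $g_1'$ is non-decreasing (hence $g_1''\geq 0$ almost everywhere) and the changes-of-variable $u=g_1(x)$ and $u=g_1'(x)$ used below are monotone.

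For the first step I would set
\[
E_1=\bigl\{r\in(0,1):\ g_1'(\ln r)>h(r)\,\varphi_1(g_1(\ln r))\bigr\},
\]
so that on $E_1$ one has $h(r)<g_1'(\ln r)/\varphi_1(g_1(\ln r))$. Then
\[
h\mbox{-meas}(E_1)\leq \int_{E_1}\frac{g_1'(\ln r)}{\varphi_1(g_1(\ln r))}\,dr\leq \int_{\ln E_1}\frac{g_1'(x)}{\varphi_1(g_1(x))}\,dx,
\]
where the last step uses $x=\ln r$ and $dr=e^x\,dx\leq dx$ for $x<0$. The further substitution $u=g_1(x)$ reduces the right-hand side to a tail of $\int_{0}^{+\infty}du/\varphi_1(u)$, which is finite by hypothesis. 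Hence $h\mbox{-meas}(E_1)<+\infty$ and $g_1'(\ln r)\leq h(r)\varphi_1(g_1(\ln r))$ off $E_1$.

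The second step repeats the same argument one floor up: put
\[
E_2=\bigl\{r\in(0,1):\ g_1''(\ln r)>h(r)\,\varphi_2(g_1'(\ln r))\bigr\},
\]
and the analogous two substitutions (now $u=g_1'(x)$) yield $h\mbox{-meas}(E_2)\leq \int du/\varphi_2(u)<+\infty$. Taking $E=E_1\cup E_2$, one has $h\mbox{-meas}(E)<+\infty$; on $(0,1)\setminus E$ both bounds hold simultaneously, and since $\varphi_2$ is increasing, substituting the first into the second delivers the required inequality
\[
g_1''(\ln r)\leq h(r)\,\varphi_2\bigl(h(r)\,\varphi_1(g_1(\ln r))\bigr).
\]

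The only real obstacle I anticipate is a regularity technicality: since $g_1$ is merely convex, $g_1''$ exists only almost everywhere, so either the conclusion should be read in the a.e.\ sense, or the exceptional set should be enlarged by a null (hence $h$-null) set at whose points $g_1''$ is interpreted via, say, the right derivative of $g_1'$. I would also check that the portion of $(0,1)$ on which $g_1(\ln r)<0$ (possible when $|a_0|<1$) is contained in some $[0,r_*]$ with $r_*<1$, so it is $h$-null after a further harmless enlargement of $E$; this confines the substitution $u=g_1(x)$ to the range $u\geq 0$ on which the convergence hypothesis $\int_0^{+\infty}du/\varphi_i(u)<+\infty$ directly applies.
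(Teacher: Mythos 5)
Your proof is correct and is essentially the argument that must underlie Lemma \ref{l3} (which the paper imports from \cite{skkur} without proof): an iterated Borel-type exceptional-set decomposition, first controlling $g_1'$ by $h\cdot\varphi_1(g_1)$ and then $g_1''$ by $h\cdot\varphi_2(g_1')$, in each case bounding the $h$-measure of the bad set by a convergent tail of $\int du/\varphi_i(u)$ after the monotone substitution and the crude estimate $e^x\,dx\le dx$ for $x<0$ --- this is precisely the two-set scheme the paper itself uses in its proof of Lemma \ref{l4}, where $\varphi_1(y)=y(\ln y)^{1+\varepsilon_2}$ and $\varphi_2(y)=y(\ln y)^{1+\varepsilon_1}$ and the side-conditions $g_1\ge 2$, $g_1'\ge 2$ play the role of your truncation to $g_1\ge 0$. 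One minor correction: $G_f(r)=\sum_{n\ge 0}|a_n|r^n$ is real-analytic on $(0,1)$, so $g_1$ is $C^\infty$ and $g_1''$ exists classically everywhere; the a.e.\ / right-derivative caveat you raise is unnecessary, though the log-convexity you invoke (via Cauchy--Schwarz, $g_1''\ge 0$ and $g_1'$ non-decreasing) is exactly what legitimises the substitution $u=g_1'(x)$.
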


We also denote
\begin{gather*}
A(r)=g'_1(\ln r)=\frac{d\ln G_f(r)}{d\ln r}=
\sum_{n=0}^{+\infty}\frac{n|a_n|r^n}{ G_f(r)},\\
B^2(r)=g''_1(\ln r)=\sum_{n=0}^{+\infty}\frac{n^2|a_n|r^n}{ G_f(r)}-A^2(r).
\end{gather*}

\begin{Lemma} \label{l4}
For $h\in H$ and all $\varepsilon>0$ there exists a set $E\subset(0,1)$ such that $h\mbox{-meas}(E)<+\infty$ and for all $r\in(0,1)\backslash
 E$ we have
\begin{gather*}
A(r)\leq h(r)\ln\{h(r)\mu_f(r)\}\ln_2^{1+\varepsilon}\{h(r)\mu_f(r)\},\\
B^2(r)\leq h^{2+\varepsilon }(r)\ln\{h(r)\mu_f(r)\}\ln_2^{2+\varepsilon}\{h(r)\mu_f(r)\}.
\end{gather*}
\end{Lemma}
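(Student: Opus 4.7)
My plan is to combine Lemma~\ref{l3} with a Borel--Nevanlinna type argument for the increasing function $G_f(r)$, together with a preliminary Wiman--Valiron comparison of $\ln G_f(r)$ and $\ln\{h(r)\mu_f(r)\}$ drawn from the results cited in the introduction.

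First I will establish the preliminary comparison. The function $\tilde f(z)=\sum_{n\ge 0}|a_n|z^n$ is analytic in $\mathbb D$, with $\mu_{\tilde f}(r)=\mu_f(r)$ and $M_{\tilde f}(r)\ge \tilde f(r)=G_f(r)$, so applying the Wiman--Valiron inequality \eqref{2} from \cite{ko} to $\tilde f$ produces a set $E_0$ of finite $h$-measure such that for $r\notin E_0$
$$\ln G_f(r)\le (1+o(1))\ln\{h(r)\mu_f(r)\},\qquad \ln_2 G_f(r)\le \ln_2\{h(r)\mu_f(r)\}+O(1).$$

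For the bound on $A(r)=rG'_f(r)/G_f(r)$, I will apply the classical Borel--Nevanlinna argument to $G_f$: since $\int^{\infty}du/(u\ln u\,\ln_2^{1+\varepsilon/2}u)<\infty$, the set
$$E_1=\bigl\{r\in(0,1)\colon G'_f(r)>h(r)\,G_f(r)\ln G_f(r)\,\ln_2^{1+\varepsilon/2}G_f(r)\bigr\}$$
satisfies $\int_{E_1}h\le \int dG_f/(G_f\ln G_f\,\ln_2^{1+\varepsilon/2}G_f)<\infty$. Combined with the preliminary comparison, this gives $A(r)\le h(r)\ln\{h(r)\mu_f(r)\}\,\ln_2^{1+\varepsilon}\{h(r)\mu_f(r)\}$ for $r\notin E_0\cup E_1$, as required.

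For the bound on $B^2(r)=g''_1(\ln r)$, I will apply Lemma~\ref{l3} with $\varphi_1(x)=\varphi_2(x)=(x+e)\ln^{1+\varepsilon'}(x+e)$ for a small $\varepsilon'>0$, obtaining $B^2(r)\le h(r)\,\varphi_2\bigl(h(r)\varphi_1(\ln G_f(r))\bigr)$ outside a further set of finite $h$-measure. Expanding these nested expressions, substituting $\ln G_f(r)\le(1+o(1))\ln\{h(r)\mu_f(r)\}$ from the preliminary step, and absorbing any resulting $\ln h(r)$-factor into a power $h^{\varepsilon/2}(r)$ (using $\ln h\le h^{\varepsilon/2}$ for large $h$) then yields the claimed bound $B^2(r)\le h^{2+\varepsilon}(r)\ln\{h(r)\mu_f(r)\}\,\ln_2^{2+\varepsilon}\{h(r)\mu_f(r)\}$ once $\varepsilon'$ is chosen small enough. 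The main obstacle lies in this last step: tracking the iterated logarithms through the nested $\varphi_i$ and verifying that all auxiliary factors fit inside the target after a suitable choice of $\varepsilon'$; the Borel--Nevanlinna estimate and the Wiman--Valiron comparison are used essentially as black boxes.
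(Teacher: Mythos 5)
The overall architecture you propose — bound $A(r)$ and $B^2(r)$ in terms of $\ln G_f(r)$ via Borel--Nevanlinna/Lemma~\ref{l3} arguments and then convert $\ln G_f(r)$ to $\ln\{h(r)\mu_f(r)\}$ via a preliminary comparison — matches the paper's strategy, and the Borel--Nevanlinna step for $A(r)$ and the choice of $\varphi_1,\varphi_2$ in Lemma~\ref{l3} for $B^2(r)$ are both sound. The problem is the preliminary comparison itself.

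You derive $\ln G_f(r)\le(1+o(1))\ln\{h(r)\mu_f(r)\}$ outside a set of finite $h$-measure by applying inequality~\eqref{2} to $\tilde f(z)=\sum|a_n|z^n$. But \eqref{2}, as cited from \cite{ko}, is stated only for $h(r)=(1-r)^{-1}$, so the exceptional set it supplies has finite \emph{logarithmic} measure, not finite $h$-measure for an arbitrary $h\in H$; if $h$ grows faster than $(1-r)^{-1}$ that set can have infinite $h$-measure. The general-$h$ version from \cite{skkur} carries the extra hypothesis $\ln h(r)=O(\ln_2 G_f(r))$ or $\ln_2 G_f(r)=O(\ln h(r))$, which Lemma~\ref{l4} does not assume. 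So the comparison you black-box is precisely what is \emph{not} available for an arbitrary $h\in H$. The paper closes exactly this gap with a self-contained argument: it introduces the random variable $\xi$ with $P(\xi=n)=|a_n|r^n/G_f(r)$, applies Chebyshev's inequality to get $G_f(r)\le 2\mu_f(r)(2\sqrt{2g_1''(\ln r)}+1)$, and combines this with the Borel--Nevanlinna exceptional sets for $g_1'$ and $g_1''$ to derive $\ln G_f(r)\le 2\ln\{h(r)\mu_f(r)\}$ directly, without invoking any prior Wiman--Valiron inequality. That Chebyshev step is the missing idea in your write-up; without it (or an equivalent first-principles comparison), the proof does not go through for general $h\in H$.
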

\begin{proof}[Proof.] Let $(\Omega, \mathcal{A}, P)$\ be
a probability space which contains the discrete random variable
$\xi$ with the distribution
$$
P(\xi=n)=\frac{|a_n|e^{nx}}{G_f(e^x)} .
$$
Then the mean $M\xi=g'_1(x)$ and the variance $D\xi=g''_1(x).$

  Let $x=\ln r<0.$ Using Chebyshev's inequality we get
  $P(|\xi-g'_1(x)|<\sqrt{2g''_1(x)}\ )\geq 1/2,$ i.e.
  \begin{gather}
  \nonumber
g(x)\le 2\sum_{|n-g_1'(x)|<\sqrt{2g_1''(x)}}|a_n|e^{xn}\leq\\
\label{star}
\leq2\mu_{f}(r)\sum_{|n-g_1'(x)|<\sqrt{2g_1''(x)}}1
\leq2\mu_{f}(r)(2\sqrt{2g_{1}{''}(x)}+1).
  \end{gather}

For fixed $\varepsilon_1>0,\ \varepsilon_2>0$ we define
\begin{gather*}
E_1=\{x<0\colon g_{1}''(x)>h(e^x)g_{1}{'}(x)
(\ln g_{1}'(x))^{1+\varepsilon_1},\ g_{1}'(x)\ge 2\},\\
E_2=\{x<0\colon g_1'(x)>h(e^x)g_1(x)(\ln g_1(x))^{1+\varepsilon_2},\ g_{1}(x)\ge 2\}.
\end{gather*}

So,
$$
\int_{E_1\bigcup E_2}{h(e^x) dx}=\int_{E} \frac{h(r)}{r}dr<+\infty,\
\int_{E}h(r)dr<+\infty,
$$
where $E$ is the image of the set $E_1\cup E_2$ by the mapping $r=e^x.$
Therefore, $h$-$\mbox{meas} E=\int_{E} h(r)dr<+\infty.$

Then from (\ref{star}) we obtain as $r\to1-0,\ (r\notin E)$
\begin{gather*}
g(\ln r)\leq
2\mu_f(r) \Bigl(2\sqrt{2}\sqrt{h(e^x)g'_1(x)\ln^{1+\varepsilon_1} g'_1(x)
}+1\Bigl)\leq\\
\leq4\mu_f(r) \left(\sqrt{2h^2(e^x)   g_1(x)\ln^{1+\varepsilon_2}g_1(x)}
\ln^{\frac{1+\varepsilon_1}{2}}\Bigl\{ h(e^x)
{g_1(x)\ln^{1+\varepsilon_2}g_1(x)}\Bigl\}+1\right)\leq\\
\leq6\mu_f(r) h(r)\sqrt{g_1(x)}\ln^{\frac{1+\varepsilon_2}2}g_1(x)
\ln^{\frac12+\varepsilon_1}\{ h(r) g_1(x)\},\\
g_1(x)=\ln g(x)\leq\ln6+\ln\{h(r)\mu_f(r)\}+\ln g_1(x)+\ln_2\{h(r)\mu_f(r)\},\\
g_1(x)\leq2\ln\{h(r)\mu_f(r)\}.
\end{gather*}
Now for $\delta>2(\varepsilon _1+\varepsilon _2)$ we have
\begin{gather}\nonumber
 G_f(r)\leq\mu_f(r) h(r)\ln^{1/2}\{h(r)\mu_f(r)\}\times\\
 \nonumber
 \times\big(\ln_2\{h(r)\mu_f(r)\}\ln\{ h(r)\ln\{h(r)\mu_f(r)\}\}\big)^{\frac{1+\delta}2},\\
\label{6}
 M_f(r)\!\leq\! G_f(r)\!\leq\!\mu_f(r) h(r)\ln^{1/2}\{h(r)\mu_f(r)\}\ln^{1/2+\delta} h(r)
\ln_2^{1+\delta}\{h(r)\mu_f(r)\},\\
\nonumber
g_1(x)=(1+o(1))\ln\{h(r)\mu_f(r)\},\ r\to1-0,\ (r\notin E).
\end{gather}

If we choose  $\varphi_i(x)=(x+2)\ln^{1+\varepsilon_0/2}(2+x),
\ i\in\{1,2\}$ in Lemma \ref{l3}, then we get that  outside a set of finite $h$-measure
\begin{gather*}
A(r)\leq h(r)\varphi(g_1(\ln r))\leq h(r) g_1(\ln r)
\ln^{1+\varepsilon_0}g_1(\ln r)\leq\\
\leq h(r)\ln\{h(r)\mu_f(r)\}\ln_2^{1+\varepsilon }\{h(r)\mu_f(r)\}.\\
B^2(r)\leq h(r)\varphi_2(h(r)\varphi_1(g_1(\ln r)))\leq\\
 \leq h(r) h(r)\varphi_1(g_1(\ln r))\ln^{1+\varepsilon _0}( h(r)\varphi_1(g_1(\ln r)))\leq\\
\leq h^2(r)g_1(\ln r)\ln^{1+\varepsilon _0}g_1(\ln r)
\ln^{1+\varepsilon _0}\{ h(r) g_1(\ln r)\ln^{1+\varepsilon _0}g_1(\ln r)\}\leq\\
\leq h^{2+\varepsilon }(r)\ln^{1+\varepsilon }\{h(r)\mu_f(r)\}.
\end{gather*}
\end{proof}

\section{Classes of analytic functions in which the
Wiman-Valiron type inequality (\ref{2}) can be almost surely
improved}

In the sequel, the notion ``almost surely'' will be used in the
sense that the corresponding property holds almost everywhere with
respect to Lebesgue measure on the real line. Here we  prove
the following theorem.
\begin{Theorem} \label{t1}
If $f(z,t)$ is an analytic function of the form  (\ref{3}) and
a sequence $(\theta_n)_{n\geq0}$ satisfies condition (\ref{4}), then
for all $\delta>0$ and almost surely for $t$ there exists a set
$E(\delta,t)\subset(0,1)$ such that
$h\mbox{-meas}(E(\delta,t))<+\infty$ and the maximum modulus
$M_f(r,t)=M_{f_t}(r)=\max\limits_{|z|\leq r}|f_t(z)|$ satisfies the inequality
\begin{equation}\label{11}
M_f(r,t)\leq\mu_f(r) \sqrt{
h(r)}\ln^{1/4}\{h(r)\mu_f(r)\}\ln^{3/4+\delta}
h(r)\ln^{1+\delta}_2\{h(r)\mu_f(r)\}
\end{equation}
for $r\in(0,1)\backslash E(\delta,t).$
\end{Theorem}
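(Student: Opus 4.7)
The plan is to combine a Salem--Zygmund type randomization argument (Lemma~\ref{l1}) with the deterministic estimates from Lemma~\ref{l4} in order to replace the factor $h(r)\ln^{1/2}\{h(r)\mu_f(r)\}$ that appears in the bound $M_f(r)\le G_f(r)$ by its square root. Writing $f_t(re^{i\psi})=\sum_n a_nr^n e^{in\psi}e^{i\theta_nt}$ and truncating at an index $N=N(r)$ to be chosen later, Lemma~\ref{l1} applied to the coefficients $\{a_kr^k\}_{k\le N}$ yields that, outside a $t$-set of normalised measure at most $N^{-\beta}$, the head satisfies
\begin{equation*}
\max_{\psi}\Bigl|\sum_{k=0}^{N}a_kr^k e^{ik\psi}e^{i\theta_kt}\Bigr|\le A_{\beta q}\,S_f(r)\,\ln^{1/2}N,
\end{equation*}
where $S_f(r)=(\sum|a_n|^2r^{2n})^{1/2}$. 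The decisive deterministic input is the elementary inequality $S_f^2(r)\le\mu_f(r)G_f(r)$ which, fed with the bound~(\ref{6}) on $G_f(r)$, produces exactly the prefactor $\mu_f(r)\sqrt{h(r)}\,\ln^{1/4}\{h(r)\mu_f(r)\}$ demanded by the theorem, plus a secondary log correction of the form $\ln^{1/4+\delta/2}h(r)\,\ln_2^{1/2+\delta/2}\{h(r)\mu_f(r)\}$.

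Next I would discretise $r$. Using Lemma~\ref{l2} with $k(r)$ chosen as (a smooth majorant of) $h(r)\mu_f(r)$, and simultaneously throwing away the exceptional set produced by Lemma~\ref{l4}, I obtain radii $r_n\to1-0$ lying outside the bad set on which the $A(r_n)$, $B^2(r_n)$ and $G_f(r_n)$ estimates all hold and along which $\ln k(r_n)\ge n/2$. For each $r_n$ I would choose $N_n$ comparable to a fixed power of $A(r_n)+B^2(r_n)$; by Lemma~\ref{l4} this is bounded above by a polynomial in $h(r_n)\mu_f(r_n)$, so $\ln N_n\lesssim\ln h(r_n)+\ln_2\{h(r_n)\mu_f(r_n)\}$, and $\sum_nN_n^{-\beta}<+\infty$. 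A Borel--Cantelli argument in $t$ then ensures that for almost every $t$ the Salem--Zygmund inequality above holds for every sufficiently large $n$ along the full sequence $(r_n)$. The tail $\sum_{k>N_n}|a_k|r_n^k$ is estimated using the fact that outside the central window of width $\sim B(r_n)$ the terms $|a_k|r_n^k$ decay rapidly, so it is absorbed into the main term. Multiplying the $S_f$-bound by $\ln^{1/2}N_n\lesssim\ln^{1/2}h(r_n)+\ln_2^{1/2}\{h(r_n)\mu_f(r_n)\}$ yields (\ref{11}) at every $r_n$.

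To upgrade the discrete statement to an estimate valid for all $r\in(0,1)\setminus E(\delta,t)$, I would invoke property~3) of Lemma~\ref{l2}: whenever the interval $(r_n,r_{n+1})$ is not entirely contained in the exceptional set, $k(r_{n+1})\le e\,k(r_n)$, so $h(r)\mu_f(r)$ changes by at most a bounded factor across the interval. Combined with the monotonicity of $M_f(r,t)$ in $r$ and with the fact that $\mu_f$ is non-decreasing, this lets the bound at $r_n$ propagate to $r\in(r_n,r_{n+1})$ at the cost of constants that are harmlessly absorbed into the $\delta$ in the exponents $3/4+\delta$ and $1+\delta$. The chief obstacle I anticipate is the calibration of $N_n$: it must be large enough that the tail is genuinely negligible against $S_f(r_n)\ln^{1/2}N_n$, yet small enough that $\ln^{1/2}N_n$ contributes no more than the advertised logarithmic factor. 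Balancing these requirements against the form of the corrections in Lemma~\ref{l4} is what forces the exponents $3/4+\delta$ and $1+\delta$ in place of the formally expected $3/4$ and $1$.
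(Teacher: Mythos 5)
Your plan follows the paper's proof step for step: truncate $f_t$, bound the polynomial head with the lacunary Salem--Zygmund inequality of Lemma~\ref{l1} together with the elementary bound $S_f^2(r)\le\mu_f(r)G_f(r)$ and the $G_f$-estimate~(\ref{6}), control the tail via a Markov-type inequality for the probability measure $P(X=n)=|a_n|r^n/G_f(r)$, discretise with Lemma~\ref{l2} at $k(r)=h(r)\mu_f(r)$, apply Borel--Cantelli, and then propagate from the $r_n$ to all $r\notin E$ using property~3) of Lemma~\ref{l2}. This is exactly the argument in the paper.

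The one point you leave unsupported is the assertion $\sum_n N_n^{-\beta}<+\infty$. A cutoff $N_n$ taken as a fixed power of $A(r_n)+B^2(r_n)$ has, by Lemma~\ref{l4}, a good \emph{upper} bound, but no evident \emph{lower} bound in terms of $n$, which is what Borel--Cantelli requires. The paper avoids this by fixing the explicit cutoff $C_1(r)=h^2(r)\ln^2\{h(r)\mu_f(r)\}$: then property~2) of Lemma~\ref{l2}, namely $\ln\{h(r_n)\mu_f(r_n)\}\ge n/2$, gives $C_1(r_n)\ge n/2$ directly, so $\sum_n [C_1(r_n)]^{-2}<+\infty$ with $\beta=2$; at the same time Lemma~\ref{l4} shows $C_1(r)>C(r)=A(r)h(r)\ln^{1/2+\delta}\{h(r)\mu_f(r)\}$ off a set of finite $h$-measure, so Markov still yields $\sum_{n\ge C_1(r)}|a_n|r^n\le\mu_f(r)\ln^{1/2+\delta}h(r)$, which is dominated by the main term since $\sqrt{h(r)}$ beats any power of $\ln h(r)$. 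Substituting this cutoff for yours closes the gap, and the rest of your outline matches the paper line by line.
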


We note that from inequality (\ref{11}) it follows that
\begin{equation}\label{12}
\varlimsup_{\begin{substack} {r\to1-0 \\ r\notin E}\end{substack}}\Delta_{h}(r,f_t)=\varlimsup_{r\to1-0}\frac{\ln M_f(r,t)-\ln\mu_f(r)}{2\ln h(r)+\ln_2\{h(r)\mu_f(r)\}}\leq\frac14.
\end{equation}
\begin{proof}[Proof.]\ Let $(\Omega, \mathcal{A}, P)$\ be
a probability space which contains a random variable
$X=X(\omega)\colon\Omega\to\mathbb{Z}_+$\ with the distribution  $
P(X=n)=|a_n|r^n/ G_f(r).$  Using Markov's inequality for the random
variable $X$  with mean value $MX=A(r)$ we get
$$
\sum_{n\geq C}\frac{|a_n|r^n}{G_f(r)}=P(X\geq
C)\leq\frac{MX}{C}=\frac{A(r)}{C}.
$$

Let $C=C(r)=A(r) h(r) \ln^{1/2+\delta}\{h(r)\mu_f(r)\}$ and $$
C_1(r)=h^2(r)\ln^2\{h(r)\mu_f(r)\}.
$$

By Lemma \ref{l4} $C_1(r)>C(r)$ for $r\in(r_0,1)\backslash E.$
Using (\ref{6}) we have
\begin{gather}
\nonumber
\sum_{n\geq C_1(r)}|a_n|r^n\leq\sum_{n\geq C(r)}|a_n|r^n\leq
\frac{A(r) G_f(r)}{A(r) h(r)\ln^{1/2+\delta}\{h(r)\mu_f(r)\}}\leq\\
\label{7}
\leq\frac{ h(r)\mu_f(r)\ln^{1/2+\delta} h(r)\ln^{1/2+\delta}\{h(r)\mu_f(r)\}}{ h(r)\ln^{1/2+\delta}\{h(r)\mu_f(r)\}}=
\mu_f(r)\ln^{1/2+\delta} h(r)
\end{gather}
for $r\notin E,$ where $E$ is a set of finite $h$-measure.

 We put $k(r)= h(r)\mu_f(r)$ in Lemma \ref{l2} and let $(r_k)_{k\geq0}$ be the sequence
for which conse\-quences of this lemma are valid.
We denote by $F_k$ the set of $t\in\mathbb{R}$ such that
$$
W(r_k)=\max_{0\leq \psi\leq2\pi}\Biggl|\sum_{n\leq[C_1(r_k)]}
a_nr_k^ne^{in\psi}e^{i\theta_nt}\Biggl|\geq A_{\beta q}S_{[C_1(r_k)]}(r_k)\ln^{1/2}[C_1(r_k)].
$$
It follows from Lemma \ref{l2} with $\beta=2$ that
$$
\sum_{k=1}^{+\infty}P(F_k)\leq
\sum_{k=1}^{+\infty}\frac1{[C_1(r_k)]^2}\leq
\sum_{k=1}^{+\infty}\frac1{[\ln\{\mu_f(r_k)h(r_k)\}]^2}\leq
\sum_{k=1}^{+\infty}\frac4{k^2}<+\infty.
$$

Then by Borel-Cantelli's lemma for  $k\geq k_0(t)$ and almost
surely for $t\in\mathbb{R}$ we obtain
\begin{equation}\label{8}
W(r_k)<A_qS_{[C_1(r_k)]}(r_k)\ln^{1/2}[C_1(r_k)].
\end{equation}
 From  inequalities (\ref{6}), (\ref{8}) and $S_{[C_1(r)]}(r)\leq M_f(r)\mu_f(r)$
 it follows that
 \begin{gather}
 \nonumber
 W(r_k)<\sqrt{\mu_f(r_k)}\sqrt{\mu_f(r_k)h(r_k)}\ln^{1/4}\{h(r_k)\mu_f(r_k)\}\times\\
 \nonumber
 \times
 \ln^{1/4+2\delta/3}h(r_k)
 \ln^{1/2+2\delta/3}_2\{h(r_k)\mu_f(r_k)\}
 \ln^{1/2}(h^2(r)\ln^2\{h(r)\mu_f(r)\})\leq\\
 \label{9}
 \leq\mu_f(r_k)\sqrt{h(r_k)}\ln^{1/4}\{h(r_k)\mu_f(r_k)\}
 \ln^{3/4+3\delta/4}h(r_k)
 \ln^{1+3\delta/4}_2\{h(r_k)\mu_f(r_k)\}.
 \end{gather}

 Since
 $$
 M_f(r,t)\leq\sum_{n\geq C_1(r)}|a_n|r^n+W(r),
 $$
 from (\ref{7}) and (\ref{9}) we get
 \begin{gather}
 \nonumber
 M_f(r_k,f)\leq\mu_f(r_k)\sqrt{h(r_k)}\ln^{1/4}\{h(r_k)\mu_f(r_k)\}\times\\
 \label{10}\times
 \ln^{3/4+{4\delta}/5}h(r_k)
 \ln^{1+{4\delta}/5}_2\{h(r_k)\mu_f(r_k)\}.
 \end{gather}
 We suppose that $r_{k_2(t)}\in(0,1)$ is some number outside the set $E.$
 Then for $r\in(r_p,r_{p+1}),$ $p>k_2(t)$ by Lemma \ref{l2} we obtain
 \begin{gather}
 \label{st1}
 \mu_f(r_{p+1})h(r_{p+1})\leq e\mu_f(r_{p})h(r_{p})\leq e\mu_f(r)h(r),\\
 \label{st2}
 \mu_f(r_{p+1})=h(r_{p+1})\frac{\mu_f(r_{p+1})}{h(r_{p+1})}\leq
 eh(r_{p})\frac{\mu_f(r_{p})}{h(r_{p+1})}\leq
 eh(r)\frac{\mu_f(r)}{h(r_{p+1})}\leq e\mu_f(r),\\
 \label{st3}
 h(r_{p+1})=\frac{\mu_f(r_{p+1})h(r_{p+1})}{\mu_f(r_{p+1})}\leq
 e\frac{\mu_f(r)h(r)}{\mu_f(r_{p+1})}\leq e h(r).
 \end{gather}

 Finally, from (\ref{10}) we have for $r\in(r_p,r_{p+1})$
 \begin{gather*}
 M_f(r,t)\leq M_f(r_{p+1},t)\leq\\
\leq\mu_f(r) \sqrt{ h(r)}\ln^{1/4}\{h(r)\mu_f(r)\}\ln^{3/4+\delta} h(r)\ln^{1+\delta}_2\{h(r)\mu_f(r)\}
 \end{gather*}
almost surely for $t\in\mathbb{R}$.
\end{proof}
By $\mathcal{L}$ we denote the class of increasing to $+\infty$
functions $l(x)$ on $[0,+\infty).$ Let
$$
\gamma(l)=\varlimsup_{x\to+\infty}\frac{\ln l(x)}{\ln x}.
$$

Now we consider the class of analytic functions of the form  (\ref{3}), for which
the sequence $(\theta_n)_{n\geq0}$ satisfies the condition
\begin{equation}\label{13}
\frac{\theta_{n+1}}{\theta_n}\geq1+\frac1{\varphi(n)},\ \varphi\in \mathcal{L}.
\end{equation}

{\it What constant  can we put in the inequality (\ref{12}) instead of $1/4$
for this class of analytic functions? Under which conditions on the function $\varphi(x)$ does
the inequality (\ref{12}) hold?}
We give answers to these questions in Corollaries \ref{c1} and~\ref{c2}.

Firstly, we note that  one cannot  sharpen  inequality  (\ref{2}) for a rapidly growing function $\varphi(x)$.
Indeed, if $\varphi(x)=x,$ then we may choose $\theta_n=n,\ h(r)=(1-r)^{-1}$
and  $g(z)=\sum_{n=0}^{+\infty}e^{\sqrt{n}}z^n.$
As it is known from \cite{sul},
\begin{gather*}
M_g(r,t)=\max\{|g(r,t)|\colon |z|\leq r\}=
\max_{0\leq\psi\leq2\pi}\Biggl|\sum_{n=0}^{+\infty}a_nr^ne^{int}e^{in\psi}\Biggl|=\\
=\max_{0\leq\psi\leq2\pi}\Biggl|\sum_{n=0}^{+\infty}a_nr^ne^{in(t+\psi)}\Biggl|=
\max_{0\leq\psi\leq2\pi}\Biggl|\sum_{n=0}^{+\infty}a_nr^ne^{in\psi}\Biggl|=\\
=M_g(r)\geq C_1\mu_g(r) h(r)\ln^{1/2}\{\mu_g(r) h(r)\},
\end{gather*}
when $r\to1-0$ and $t\in\mathbb{R}.$
So, in order to improve inequality (\ref{2}) $\varphi(x)$ must satisfy
the  condition $\gamma(\varphi)<1.$
\begin{Theorem} \label{t2}
Let $f_t(z)$ be an analytic function of the form  (\ref{3}), $h\in
H,$ à sequence $(\theta_n)_{n\geq0}$ satisfy condition
(\ref{13}), where $\varphi\in\mathcal{L}.$ If $v\in\mathcal{L}$
and $\gamma(v)\leq1/4,$ then almost surely
for $t\in\mathbb{R},$
 all $\varepsilon >0$ there exists a set $E(\varepsilon,t)\subset(0,1)$
 such that $h\mbox{-meas}(E(\varepsilon ,t))<+\infty$ and for
 $r\in(0,1)\backslash E(\varepsilon ,t)$ we have
\begin{gather}
\nonumber
M_f(r,t)\leq\sqrt{ h(r)\ln h(r)}\mu_f(r)\ln^{1/4}\{h(r)\mu_f(r)\}\ln^{1+\varepsilon }
\{\ln h(r)\ln\{h(r)\mu_f(r)\}\}\times\\
\label{17}
\times\Biggl(v\Bigl(8h^2(r)\ln\{h(r)\mu_f(r)\}\Bigl)
+\varphi^{\frac12}\Biggl(
\frac{h^{\frac32}(r)\ln^{\frac54}\{h(r)\mu_f(r)\}\ln^{1+\varepsilon }_2\{h(r)\mu_f(r)\}}{v( h(r)\ln\{h(r)\mu_f(r)\})}\Biggl)\Biggl).
\end{gather}
\end{Theorem}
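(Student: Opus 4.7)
The plan is to mimic the proof of Theorem~\ref{t1}, but with a cutoff adapted to the generalised gap condition (\ref{13}). Begin by applying Lemma~\ref{l4} and the consequence (\ref{6}) to fix, outside a set $E_{0}$ of finite $h$-measure, the control of $A(r)$, $B^{2}(r)$ and $G_f(r)$ in terms of $h(r)\mu_f(r)$. Then split $f_t(z)$ at the index $N=N(r):=[C_1(r)]$, where $C_1(r):=8h^{2}(r)\ln\{h(r)\mu_f(r)\}$; this is precisely the expression appearing inside $v$ in (\ref{17}). For the tail, use Markov's inequality for the discrete random variable $X$ from the proof of Theorem~\ref{t1}, $P(X=n)=|a_n|r^n/G_f(r)$, with mean $A(r)$; the cutoff is tuned so that $\sum_{n>N}|a_n|r^n$ is bounded, up to a constant, by $v(C_1(r))\,\mu_f(r)$ times the deterministic prefactor $\sqrt{h(r)\ln h(r)}\,\ln^{1/4}\{h(r)\mu_f(r)\}\ln^{1+\varepsilon}_{2}\{h(r)\mu_f(r)\}$ of (\ref{17}).

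For the truncated oscillatory sum $\sum_{n\le N}a_n r^n e^{in\psi}e^{i\theta_n t}$, apply Lemma~\ref{l1}. The key observation is that, under (\ref{13}), for $n\le N$ the ratio $\theta_{n+1}/\theta_n$ is bounded below by $q_N:=1+1/\varphi(N)$, so Lemma~\ref{l1} applies with this $N$-dependent $q$. Tracking Still's constant through the proof yields $A_{\beta q_N}=O\bigl(\sqrt{\varphi(N)}\bigr)$ as $\varphi(N)\to+\infty$; this produces the factor $\varphi^{1/2}$ in the second summand of (\ref{17}), whose argument is essentially $N/v(h(r)\ln\{h(r)\mu_f(r)\})$ after the prefactor from $S_N(r)\le\sqrt{\mu_f(r)M_f(r)}$ and (\ref{6}) is absorbed. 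Now invoke Lemma~\ref{l2} with $k(r)=h(r)\mu_f(r)$, producing a sequence $(r_k)$, and apply Borel--Cantelli with $\beta=2$: since $[C_1(r_k)]\ge(\ln k(r_k))^{2}\ge(k/2)^{2}$ by part~2) of Lemma~\ref{l2}, the series $\sum 1/[C_1(r_k)]^{2}$ converges. The hypothesis $\gamma(v)\le 1/4$ is precisely what allows the replacement of the raw $\ln^{1/2}N$ inside Lemma~\ref{l1} by the more general $v(N)$ without breaking summability, modulo a harmless enlargement of the threshold inside the probability estimate.

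Summing the tail and oscillation bounds gives (\ref{17}) at the discrete points $r_k$; the extension to arbitrary $r\in(r_p,r_{p+1})$ then follows verbatim from the monotonicity relations (\ref{st1})--(\ref{st3}) used at the end of the proof of Theorem~\ref{t1}, because those relations depend only on the conclusions of Lemma~\ref{l2} and not on the sequence $(\theta_n)$. The main obstacle is the quantitative dependence of $A_{\beta q}$ on $q$ in Lemma~\ref{l1}: the lemma is stated for fixed $q>1$, whereas here $q=q_N\to 1$, and one must revisit Still's argument to extract the $O(\sqrt{\varphi(N)})$ blow-up; a secondary technical point is the fine calibration of the Markov cutoff so that the tail contribution and the Still contribution match the explicit sum $v(\,\cdot\,)+\varphi^{1/2}(\,\cdot\,)$ displayed in (\ref{17}).
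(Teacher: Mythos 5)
There is a genuine gap, and it is the one you yourself flag at the end: you propose to apply Lemma~\ref{l1}, stated for a fixed Hadamard gap constant $q>1$, with the degenerating value $q_N=1+1/\varphi(N)$, and to recover the $\varphi^{1/2}$ factor by ``tracking Steele's constant'' $A_{\beta q}$ through the proof of Lemma~\ref{l1} as $q\to1$. The paper does not do this. It invokes Lemma~\ref{l5} (from Filevych's paper), which is stated \emph{exactly} for sequences satisfying condition~\eqref{13} and already carries the $\{\varphi(N)S_N\ln N\}^{1/2}$ threshold with a constant $A_\beta$ depending on $\beta$ alone. Your plan replaces a lemma that is available and tailor-made with a re-derivation of it, and you correctly note that this re-derivation is not done in your write-up. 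As it stands, this is a missing proof of the central probabilistic estimate, not merely a ``technical point.''

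A second issue is the cutoff. You split at $N=[C_1(r)]$ with $C_1(r):=8h^2(r)\ln\{h(r)\mu_f(r)\}$, on the grounds that this is ``the expression inside $v$ in~\eqref{17}.'' That identification is mistaken: the argument of $v$ in~\eqref{17} is not the truncation index. The paper takes the Markov cutoff $C(r)=A(r)T(r)$ with
$T(r)=\sqrt{h(r)}\ln^{1/4}\{h(r)\mu_f(r)\}\big/ v\big(h^2(r)\ln\{h(r)\mu_f(r)\}\big)$,
and then $C_1(r)= h^{3/2}(r)\ln^{5/4}\{h(r)\mu_f(r)\}\ln^{1+\varepsilon}_2\{h(r)\mu_f(r)\}\big/ v\big(h^2(r)\ln\{h(r)\mu_f(r)\}\big)$, which is what sits inside $\varphi^{1/2}(\cdot)$ in~\eqref{17}. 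Only this choice makes the tail bound (via Markov and~\eqref{6}) produce the $v(\cdot)$ summand and the Filevych bound (via $S_f^2\le G_f\mu_f$ and~\eqref{6}) produce the $\varphi^{1/2}(\cdot)$ summand; with your cutoff the two pieces do not calibrate to the displayed sum $v(\cdot)+\varphi^{1/2}(\cdot)$. Relatedly, the role of $\gamma(v)\le1/4$ is misdescribed: it is used to show $C_1(r)>\ln\{h(r)\mu_f(r)\}$, hence $[C_1(r_k)]>k/2$ by Lemma~\ref{l2}(2), so that $\sum_k P(G_k)\lesssim\sum_k k^{-2}<\infty$; it is not about ``replacing $\ln^{1/2}N$ by $v(N)$'' in the probabilistic lemma. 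The final step (extension from the $r_k$ to all $r$ via~\eqref{st1}--\eqref{st3}) is fine.
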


In order to prove this theorem we need a lemma from \cite{filMS6}.
\begin{Lemma}[\cite{filMS6}] \label{l5}
If $(\theta_n)_{n\geq0}$ satisfies condition (\ref{13}),
then for all  $\beta>0$
$$
P\Biggl(\max_{0\leq\psi\leq2\pi}\Biggl|\sum_{k=1}^{N}
a_ke^{ik\psi}e^{i\theta_kt}\Biggl|\geq A_{\beta}
\Bigl\{\varphi(N)S_N\ln N\Bigl\}^{1/2}\Biggl)\leq\frac1{N^{\beta}},
$$
where $A_{\beta}$ is a constant which depends only on $\beta.$
\end{Lemma}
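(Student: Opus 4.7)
The plan is to follow the classical moment method: first establish a high-moment bound for the random sum at a fixed phase $\psi$, convert this to a tail estimate via Markov's inequality, and then pass from the pointwise estimate to the supremum over $\psi$ via a Bernstein-type discretization and a union bound. The underlying probability space is $t\in[0,2\pi]$ with $P_0=\mathfrak{m}/2\pi$, as in Lemma \ref{l1}; since $\theta_k\in\mathbb{N}$, the characters $e^{i\theta_k t}$ are orthogonal on this space, which drives the moment calculation.

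For the first step, fix $\psi$, set $b_k=a_ke^{ik\psi}$, and write $T_N(t,\psi)=\sum_{k=1}^N b_k e^{i\theta_k t}$. Expanding the $2m$-th power and using orthogonality,
$$
\mathbb{E}_t|T_N|^{2m}=\sum b_{k_1}\cdots b_{k_m}\overline{b_{l_1}\cdots b_{l_m}},
$$
where the sum runs over $2m$-tuples satisfying $\theta_{k_1}+\cdots+\theta_{k_m}=\theta_{l_1}+\cdots+\theta_{l_m}$. The target is
$$
\mathbb{E}_t|T_N|^{2m}\leq C^m\,m!\,(\varphi(N)S_N)^m.
$$
This is where condition (\ref{13}) is used: the gap inequality $\theta_{n+1}\geq\theta_n(1+1/\varphi(n))$ implies that the number of indices $l\leq N$ with $\theta_l$ lying in any interval of fixed multiplicative width is $O(\varphi(N))$. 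Iterating this constraint on the largest index on each side of the additive equation, and applying Cauchy--Schwarz on the coefficient products to bring in $S_N=\sum|a_k|^2$, yields the claimed moment bound.

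Steps 2 and 3 are essentially routine. Markov's inequality gives
$$
P(|T_N(t,\psi)|\geq\lambda)\leq\frac{C^m m!(\varphi(N)S_N)^m}{\lambda^{2m}},
$$
and choosing $m=\lceil(\beta+1)\ln N\rceil$ together with $\lambda=A'_\beta\{\varphi(N)S_N\ln N\}^{1/2}$, for $A'_\beta$ large enough, makes each of the $m$ factors on the right at most $1/e$, so the bound becomes $\leq N^{-(\beta+1)}$. For each fixed $t$, $\psi\mapsto T_N(t,\psi)$ is a trigonometric polynomial of degree $\leq N$ in $\psi$, so Bernstein's inequality gives $\|\partial_\psi T_N\|_\infty\leq N\|T_N\|_\infty$ in $\psi$. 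A uniform grid $\{\psi_j\}\subset[0,2\pi]$ of spacing $1/(4N)$, hence cardinality $O(N)$, therefore satisfies $\max_\psi|T_N(t,\psi)|\leq 2\max_j|T_N(t,\psi_j)|$.

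Finally, combining the pointwise tail bound with a union bound over the $O(N)$ grid points produces total probability $\lesssim N\cdot N^{-(\beta+1)}=N^{-\beta}$, and absorbing the factor $2$ into the constant yields $A_\beta=2A'_\beta$. The \textbf{main obstacle} is the moment bound in the first step: condition (\ref{13}) is much weaker than true lacunarity (which would give a Sidon-type estimate directly), so one must establish, with the sharp multiplicative factor $\varphi(N)^m m!$, a counting inequality for the number of additive $m$-fold representations of any integer by $\{\theta_1,\ldots,\theta_N\}$. This requires carefully bookkeeping how the clustering of the $\theta_k$'s in narrow windows is constrained by $\varphi$, and is the entire technical heart of the argument.
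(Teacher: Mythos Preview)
The paper does not prove Lemma~\ref{l5}: it is quoted from Filevych~\cite{filMS6} and used as a black box in the proof of Theorem~\ref{t2}, so there is no argument here to compare yours against.

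Your outline is nonetheless the standard Salem--Zygmund route and is sound in structure. The step you correctly identify as the obstacle --- the moment bound $\mathbb{E}_t|T_N|^{2m}\leq C^m m!\,(\varphi(N)S_N)^m$ under the weak gap condition~(\ref{13}) --- is indeed where all the work lies, and the ``peel off the largest index on each side'' heuristic is the right intuition, though turning it into a clean inequality with exactly the factor $\varphi(N)^m$ takes care. An alternative route, perhaps closer in spirit to how~\cite{filMS6} builds on Steele~\cite{st}, is to avoid the representation-counting altogether: condition~(\ref{13}) lets one partition $\{1,\ldots,N\}$ into $M=O(\varphi(N))$ subsets on each of which $(\theta_k)$ is genuinely Hadamard-lacunary (take every $\lceil C\varphi(N)\rceil$-th index), apply Lemma~\ref{l1} to each block with $\beta$ replaced by $\beta+1$, and combine by Cauchy--Schwarz,
\[
\max_{\psi}|T_N|\leq\sum_{i=1}^M\max_{\psi}|T_{N,i}|\leq\sqrt{M}\Bigl(\sum_{i=1}^M\max_{\psi}|T_{N,i}|^2\Bigr)^{1/2}\leq A_{\beta}\sqrt{\varphi(N)\,S_N\ln N}
\]
on the intersection of the $M$ good events, which has probability $\geq 1-M\cdot N^{-\beta-1}\geq 1-N^{-\beta}$. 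This reduces the lemma directly to the lacunary case already available in the paper and sidesteps the delicate combinatorics.
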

\begin{proof}[Proof of Theorem \ref{t2}.]
By Lemma \ref{l4} we obtain outside a set of finite $h$-measure
\begin{equation}\label{14}
A(r)\leq h(r)\ln\{h(r)\mu_f(r)\}\ln^{1+\varepsilon}_2\{h(r)\mu_f(r)\}.
\end{equation}

We put $C(r)=A(r)T(r),$ where
$$
T(r)=\frac{\sqrt{h(r)}\ln^{1/4}\{h(r)\mu_f(r)\}}{v(h^2(r)\ln\{h(r)\mu_f(r)\})}.
$$
Then from (\ref{14}) we have
\begin{gather*}
C(r)=A(r)T(r)\leq h(r)\ln\{h(r)\mu_f(r)\}\ln^{1+\varepsilon }_2\{h(r)\mu_f(r)\} T(r)=\\
=\frac{h^{3/2}(r)\ln^{5/4}\{h(r)\mu_f(r)\}\ln^{1+\varepsilon }_2\{h(r)\mu_f(r)\} }{v(h^2(r)\ln\{h(r)\mu_f(r)\})}=C_1(r).
\end{gather*}

 Now using Markov's inequality we get
 \begin{gather}
 \nonumber
 \sum_{n\geq C_1(r)}|a_n|r^n\leq\sum_{n\geq C(r)}|a_n|r^n\leq
 \frac{ G_f(r)}{T(r)}\leq\\
 \nonumber
 \leq\frac{ h(r)\mu_f(r)\{\ln h(r)\ln\{h(r)\mu_f(r)\}\}^{1/2}\ln^{1+\delta}\{\ln h(r)\ln\{h(r)\mu_f(r)\}\}}
 {\sqrt{ h(r)}\ln^{1/4}\{h(r)\mu_f(r)\}}\times\\
 \nonumber
 \times v(h^2(r)\ln\{h(r)\mu_f(r)\})=\\
 \nonumber
 =\mu_f(r)\sqrt{ h(r)\ln h(r)}\ln^{1/4}\{h(r)\mu_f(r)\}\ln^{1+\delta}\{\ln h(r)\ln\{h(r)\mu_f(r)\}\}\times\\
 \label{15}
 \times v(h^2(r)\ln\{h(r)\mu_f(r)\}).
 \end{gather}

Let $k(r)= h(r)\mu_f(r)$ and $(r_k)_{k\geq0}$  be the sequence
for which consequences of Lemma \ref{l2} are valid.
Denote by $G_k$ the set of such $t\in\mathbb{R},$ for which
 \begin{gather*}
 W_1(r_k)=\max_{0\leq \psi\leq2\pi}\Biggl|\sum_{n\leq[C_1(r_k)]}
a_nr_k^ne^{in\psi}e^{i\theta_nt}\Biggl|\geq\\ \geq A_{\beta}\Bigl(\varphi([C_1(r_k)])S_{[C_1(r_k)]}(r_k)\ln[C_1(r_k)]\Bigl)^{1/2},
 \end{gather*}
where $ S^2_f(r)=\sum_{n=0}^{+\infty}|a_n|^2r^{2n}.$

Since $\gamma(v)\leq1/4,$ we have
\begin{gather*}
C_1(r)>\frac{h^{3/2}(r)\ln^{5/4}\{h(r)\mu_f(r)\}\ln^{1+\varepsilon }_2\{h(r)\mu_f(r)\}}{(h^2(r)\ln\{h(r)\mu_f(r)\})^{1/4}}>\\
>{ h(r)\ln\{h(r)\mu_f(r)\}}\ln^{1+\varepsilon }_2\{h(r)\mu_f(r)\}>{\ln\{h(r)\mu_f(r)\}}.
\end{gather*}
So, by Lemma \ref{l2} $\ln k(r_n)>n/2,$ i.e.
$\ln\{h(r_n)\mu_f(r_n)\}>n/2.$ Then
$$
C_1(r_n)>{\ln\{h(r_n)\mu_f(r_n)\}}>{n/2}.
$$

Using Lemma \ref{l5} with $\beta=2$ we get
$$
\sum_{k=1}^{+\infty}P(G_k)<
\sum_{k=1}^{+\infty}\frac{1}{N^{\beta}(r_k)}<
\sum_{k=1}^{+\infty}\frac4{k^2}<+\infty.
$$
Now by Borel-Cantelli's lemma  for $k\geq k_2(t)$ and almost surely
$t\in\mathbb{R}$ we obtain
$$
W_1(r_k)<A_{\beta}\Bigl(\varphi([C_1(r_k)])S_{[C_1(r_k)]}(r_k)\ln[C_1(r_k)]\Bigl)^{1/2}.
$$
Using the inequality $ S^2_f(r)\leq G_f(r)\mu_f(r),$ we obtain
\begin{gather}
\nonumber
W_1(r_k)<\sqrt{h(r_k)\ln h(r_k)}\mu_f(r_k)\ln^{1/4}\{h(r_k)\mu_f(r_k)\}\times\\
\nonumber
\times\ln^{1+\delta}\{\ln h(r_k)\ln\{h(r_k)\mu_f(r_k)\}\}\times\\
\label{16}
\times
\varphi^{1/2}\Biggl(\frac{h^{3/2}(r_k)\ln^{5/4}\{h(r_k)\mu_f(r_k)\}\ln_2^{1+\delta}\{h(r_k)\mu_f(r_k)\}}{v(h^2(r_k)\ln\{h(r_k)\mu_f(r_k)\})}\Biggl).
\end{gather}
It follows from (\ref{16}) and (\ref{17}) that
\begin{gather*}
M_f(r_k,t)\leq\sqrt{h(r_k)\ln h(r_k)}\mu_f(r_k)\ln^{1/4}\{h(r_k)\mu_f(r_k)\}
\times\\
\times\ln^{1+\delta}\{\ln h(r_k)\ln\{h(r_k)\mu_f(r_k)\}\}
\Biggl(v(h^2(r_k)\ln\{h(r_k)\mu_f(r_k)\})+\\
+\varphi^{1/2}
\Biggl(\frac{h^{3/2}(r_k)\ln^{5/4}\{h(r_k)\mu_f(r_k)\}\ln_2^{1+\delta}\{h(r_k)\mu_f(r_k)\}}
{v(h^2(r_k)\ln\{h(r_k)\mu_f(r_k)\})}\Biggl)\Biggl).
\end{gather*}

 Using (\ref{st1})--(\ref{st3}) we get for $r\in(r_p,r_{p+1})$
\begin{gather*}
M_f(r,t)\leq
\sqrt{ h(r)\ln h(r)}\mu_f(r)\ln^{1/4}\{h(r)\mu_f(r)\}\times\\
\times
\ln^{1+2\delta}\{\ln h(r)\ln\{h(r)\mu_f(r)\}\}
\Biggl(v(8h^2(r)\ln\{h(r)\mu_f(r)\})+\\
+\varphi^{1/2}
\Biggl(\frac{h^{3/2}(r)\ln^{5/4}\{h(r)\mu_f(r)\}\ln^{1+2\delta}\ln\{h(r)\mu_f(r)\}}
{v(h^2(r)\ln\{h(r)\mu_f(r)\})}\Biggl)\Biggl).
\end{gather*}
\end{proof}

In the case when $\ln\varphi(x)=o(\ln_2 x),\ x\to+\infty$
 we have the following corollary.
 \begin{Corollary} \label{c1}
Let $f_t(z)$ be an analytic function of the form  (\ref{3}), $h\in
H,$ a~sequence $(\theta_n)_{n\geq0}$ satisfy condition
(\ref{13}), where $\varphi\in\mathcal{L}$ and
$\ln\varphi(x)=O(\ln_2 x),$ $x\to+\infty.$ Then there exists a
set $E(\delta,t)\subset(0,1)$ such that
$h\mbox{-meas}(E(\delta,t))<+\infty$ and almost surely
for $t\in\mathbb{R}$ we get
 $$
 \varlimsup_{\begin{substack} {r\to1-0 \\ r\notin E}\end{substack}}\Delta_{h}(r,f_t)\leq\frac14.
 $$
 \end{Corollary}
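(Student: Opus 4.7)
The plan is to derive Corollary \ref{c1} directly from Theorem \ref{t2} by choosing an appropriate auxiliary function $v\in\mathcal{L}$ and carrying out a routine log-estimate of the right-hand side of (\ref{17}). The target $\varlimsup \Delta_h(r,f_t) \leq 1/4$ is equivalent to showing that, after taking logarithms in (\ref{17}), the dominant contribution to $\ln M_f(r,t) - \ln\mu_f(r)$ is $\tfrac12\ln h(r) + \tfrac14\ln_2\{h(r)\mu_f(r)\}$ with everything else being $o\bigl(\ln h(r) + \ln_2\{h(r)\mu_f(r)\}\bigr)$.

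First I would take $v(x) = \ln x$, which lies in $\mathcal{L}$ and satisfies $\gamma(v) = 0 \leq 1/4$, so Theorem \ref{t2} applies. The factor $\sqrt{h(r)\ln h(r)}\,\mu_f(r)\,\ln^{1/4}\{h(r)\mu_f(r)\}$ appearing in (\ref{17}) already contributes exactly the desired main terms after taking logs, namely $\tfrac12\ln h(r) + \tfrac14\ln_2\{h(r)\mu_f(r)\}$, together with negligible $O(\ln_2 h(r))$ and $O(\ln_3\{h(r)\mu_f(r)\})$ remainders coming from $\sqrt{\ln h(r)}$ and from the $(1+\varepsilon)$-power of $\ln\{\ln h(r)\ln\{h(r)\mu_f(r)\}\}$.

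Next I would estimate the bracketed factor $v(\cdot) + \varphi^{1/2}(\cdot)$. With $v(x) = \ln x$, the first term $v\bigl(8h^2(r)\ln\{h(r)\mu_f(r)\}\bigr)$ is of order $\ln h(r) + \ln_2\{h(r)\mu_f(r)\}$, so its logarithm contributes only $O\bigl(\ln_2 h(r) + \ln_3\{h(r)\mu_f(r)\}\bigr)$. For the $\varphi^{1/2}$ term, the argument inside $\varphi$ is bounded above by $h^{3/2}(r)\ln^{5/4}\{h(r)\mu_f(r)\}$ times slowly-varying factors, so its logarithm is $O\bigl(\ln h(r) + \ln_2\{h(r)\mu_f(r)\}\bigr)$. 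The hypothesis $\ln\varphi(x) = O(\ln_2 x)$ is exactly what is needed here: it yields $\ln\varphi^{1/2}(\cdot) = O\bigl(\ln_2 h(r) + \ln_3\{h(r)\mu_f(r)\}\bigr)$, again negligible against the denominator $2\ln h(r) + \ln_2\{h(r)\mu_f(r)\}$.

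Combining the two estimates, outside a set of finite $h$-measure and almost surely for $t$, I would obtain $\ln M_f(r,t) - \ln\mu_f(r) \leq \tfrac12\ln h(r) + \tfrac14\ln_2\{h(r)\mu_f(r)\} + o\bigl(\ln h(r) + \ln_2\{h(r)\mu_f(r)\}\bigr)$. Dividing through by $2\ln h(r) + \ln_2\{h(r)\mu_f(r)\}$ and applying the arithmetic identity $\tfrac12 a + \tfrac14 b = \tfrac14(2a+b)$ gives $\Delta_h(r,f_t) \leq \tfrac14 + o(1)$, and passing to $\varlimsup$ finishes the argument. The only potential pitfall is the bookkeeping in the second step — verifying that the polylogarithmic factors hidden inside the argument of $\varphi^{1/2}$ do not spoil the $\ln_2$-growth bound — but since one iteration of $\ln$ already kills a full power of $\ln h(r) + \ln_2\{h(r)\mu_f(r)\}$, the hypothesis $\ln\varphi(x) = O(\ln_2 x)$ suffices.
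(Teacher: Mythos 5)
Your derivation is correct and is evidently the derivation the paper intends (the paper states Corollary \ref{c1} immediately after Theorem \ref{t2} with no separate proof): apply Theorem \ref{t2} with a slowly growing $v$ and check that, after taking logarithms in \eqref{17}, everything beyond $\sqrt{h(r)}\,\mu_f(r)\,\ln^{1/4}\{h(r)\mu_f(r)\}$ contributes $o\big(2\ln h(r)+\ln_2\{h(r)\mu_f(r)\}\big)$, the hypothesis $\ln\varphi(x)=O(\ln_2 x)$ being exactly what tames the $\varphi^{1/2}$ factor since one logarithm turns $\ln h(r)+\ln_2\{h(r)\mu_f(r)\}$ into $\ln_2 h(r)+\ln_3\{h(r)\mu_f(r)\}$. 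Two minor remarks: you should take $v(x)=\ln(e+x)$ (or similar) rather than $v(x)=\ln x$ so that $v$ is genuinely in $\mathcal{L}$ (positive, increasing on $[0,+\infty)$); and it is worth observing that you cannot obtain this corollary by naively setting $\delta=0$ in the proof of Corollary \ref{c2}, since that proof uses $v(x)=x^{\alpha}$ with $\alpha=\frac{5\delta}{4(2+\delta)}$, and $\alpha=0$ gives a constant $v\notin\mathcal{L}$ --- so your choice of a logarithmic $v$ is not just convenient but necessary.
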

\begin{Corollary} \label{c2}
Let $f_t(z)$ be an analytic function of the form  (\ref{3}),
$h\in H,$ a sequence $(\theta_n)_{n\geq0}$ satisfy condition (\ref{13}),
where $\varphi\in\mathcal{L}$ and
\begin{equation}\label{19}
\gamma(\varphi)=\varlimsup_{n\to+\infty}
\frac{1}{\ln n}\ln\frac{\theta_n}{\theta_{n+1}-\theta_n}\leq\delta\in[0,1/2).
\end{equation}
Then for all analytic functions $f_t$ there exists a set
$E(\delta,t)\subset(0,1)$ such that
$h\mbox{-meas}(E(\delta,t))<+\infty$  and almost surely
for $t\in\mathbb{R}$ we have
 $$
 \varlimsup_{\begin{substack} {r\to1-0 \\ r\notin E}\end{substack}}\Delta_{h}(r,f_t)\leq\frac{1+3\delta}{4+2\delta}.
 $$
 \end{Corollary}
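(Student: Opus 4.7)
The plan is to deduce Corollary \ref{c2} from Theorem \ref{t2} by choosing $v$ to be a power function $v(x) = x^a$ with an optimally tuned exponent $a \in (0, 1/4]$. Such a $v$ lies in $\mathcal{L}$ with $\gamma(v) = a$, so the hypothesis $\gamma(v) \leq 1/4$ of Theorem \ref{t2} holds precisely when $a \leq 1/4$. The only preparatory step is to unfold condition (\ref{19}): from (\ref{13}) one has $\theta_n/(\theta_{n+1}-\theta_n) \leq \varphi(n)$, so (\ref{19}) gives $\gamma(\varphi) \leq \delta$ and hence $\varphi(x) \leq x^{\delta+\varepsilon_1}$ for every $\varepsilon_1 > 0$ and all sufficiently large $x$.

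Next I would insert $v(x) = x^a$ into the estimate (\ref{17}). Writing $L(r) = \ln\{h(r)\mu_f(r)\}$ and discarding doubly-logarithmic factors (which contribute only $o(\ln h + \ln L)$ to $\Delta_h$), the two summands inside the outer parenthesis become
$$v(8h^2 L) = O\bigl(h^{2a} L^{a}\bigr), \qquad \varphi^{1/2}\!\left(\frac{h^{3/2}L^{5/4}\ln_2^{1+\varepsilon}L}{v(hL)}\right) \leq \bigl(h^{3/2-a}L^{5/4-a}\bigr)^{(\delta+\varepsilon_1)/2}.$$
Multiplied by the outer prefactor $\sqrt{h\ln h}\,\mu_f\,\ln^{1/4}\{h\mu_f\}$, this produces a bound on $\ln M_f(r,t) - \ln\mu_f(r)$ of the form $\max\{A_1\ln h + B_1\ln L,\ A_2\ln h + B_2\ln L\} + o(\ln h + \ln L)$, where
$$A_1 = \tfrac{1}{2} + 2a,\ B_1 = \tfrac{1}{4} + a, \qquad A_2 = \tfrac{1}{2} + \tfrac{\delta}{2}(\tfrac{3}{2}-a),\ B_2 = \tfrac{1}{4} + \tfrac{\delta}{2}(\tfrac{5}{4}-a).$$

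The key step is then to read off $\Delta_h$ via the elementary inequality $(A\ln h + B\ln L)/(2\ln h + \ln L) \leq \max\{A/2, B\}$. The first summand was engineered to satisfy $A_1/2 = B_1 = \tfrac{1}{4}+a$, so it contributes exactly $\tfrac{1}{4}+a$. For the second summand a short check shows $B_2 - A_2/2 = \tfrac{\delta}{4}(1-a) > 0$, so $B_2$ dominates; its contribution is $\tfrac{1}{4} + \tfrac{\delta}{2}\bigl(\tfrac{5}{4}-a\bigr)$. Equating these two expressions yields the optimal choice $a = \tfrac{5\delta}{4(2+\delta)}$, whose common value is $\tfrac{1+3\delta}{4+2\delta}$. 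The constraint $a \leq 1/4$ is equivalent to $\delta \leq 1/2$, so it is satisfied strictly by (\ref{19}). Letting $\varepsilon,\varepsilon_1 \to 0$ completes the proof.

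The main obstacle is not any deep estimate but the asymmetry between the coefficients of $\ln h(r)$ and $\ln L(r)$ in the denominator $2\ln h + \ln L$ of $\Delta_h$: one has to verify separately, for each summand, which of $A_i/2$ and $B_i$ is larger, and it is precisely the dominance $B_2 > A_2/2$ that produces the lopsided numerator $1+3\delta$ rather than a more symmetric expression. Once this bookkeeping is done, the optimal $a$ and the claimed bound follow by a one-line algebraic computation.
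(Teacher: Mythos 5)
Your proof is correct and follows essentially the same route as the paper: both take $v(x)=x^{\alpha}$, insert it into Theorem~\ref{t2}'s bound~\eqref{17}, read off the coefficients of $\ln h(r)$ and $\ln\ln\{h(r)\mu_f(r)\}$, and then optimize the exponent by equating the contributions of the two summands, arriving at $\alpha=\tfrac{5\delta}{4(2+\delta)}$ and the value $\tfrac{1+3\delta}{4+2\delta}$. Your explicit $A_i,B_i$ bookkeeping together with the inequality $(A\ln h+B\ln L)/(2\ln h+\ln L)\leq\max\{A/2,B\}$ is a slightly more transparent rendering of the same optimization the paper performs more tersely (and you correctly note that the dominant coefficient $B_2$, hence the final constant, is insensitive to the $v(hL)$ versus $v(h^2L)$ discrepancy between the statement and the proof of Theorem~\ref{t2}).
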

\begin{proof}[Proof.]
If $\gamma(\varphi)=\delta\in[0,1/2),$ then we may choose
$v(x)=x^{\alpha}, \ \alpha\in[0,1/4).$ So,
\begin{gather}
\nonumber
\ln v(8h^2(r)\ln\{h(r)\mu_f(r)\})=(\alpha+o(1))(2\ln h(r)+\ln_2\{h(r)\mu_f(r)\}),\\
\nonumber
\ln \Biggl(\varphi^{1/2}\Biggl(
\frac{h^{3/2}(r)\ln^{5/4}\{h(r)\mu_f(r)\}\ln^{1+\varepsilon }_2\{h(r)\mu_f(r)\}}{v(h^2(r)\ln\{h(r)\mu_f(r)\})}\Biggl)\Biggl)\leq\\
\nonumber
\leq\!(1\!+\!o(1))\Bigl(\frac{3\delta}{4}\ln h(r)\!+\!\frac{5\delta}{8}\ln_2\{h(r)\mu_f(r)\}\!-\!
\delta\alpha\ln h(r)\!-\!\frac{\delta\alpha}{2}\ln_2\{h(r)\mu_f(r)\}\Bigl)\!=\\
\nonumber
=\Bigl(\frac{3\delta}{8}-\frac{\delta\alpha}{2}+o(1)\Bigl)
2\ln h(r)+
\Bigl(\frac{5\delta}{8}-\frac{\delta\alpha}{2}+o(1)\Bigl)
\ln_2\{h(r)\mu_f(r)\}\leq\\
\label{tri}
\leq\Bigl(\frac{5\delta}{8}-\frac{\delta\alpha}{2}+o(1)\Bigl)
(2\ln h(r)+\ln_2\{h(r)\mu_f(r)\}).
\end{gather}

From the equation $\alpha=\frac{5\delta}{8}-\frac{\delta\alpha}{2}$ we may determine
 $\alpha=\frac{5\delta}{4(2+\delta)}$  and get as $r\to1-0$
\begin{gather*}
\ln M_f(r,t)\leq(1+o(1))\Bigl(\frac12\ln h(r)+\ln\mu_f(r)+\\
+\frac14\ln_2\{h(r)\mu_f(r)\}+
\alpha(\ln h(r)+\ln_2\{h(r)\mu_f(r)\})\Bigl).
\end{gather*}

Therefore,
\begin{gather*}
 \varlimsup_{\begin{substack} {r\to1-0 \\ r\notin E}\end{substack}}\Delta_{h}(r,f_t)=\varlimsup_{\begin{substack} {r\to1-0 \\ r\notin E}\end{substack}}\frac{\ln M_f(r,t)-\ln\mu_f(r)}{2\ln h(r)+\ln_2\{h(r)\mu_f(r)\}}\leq\\
\leq \frac14+\alpha=\frac14+\frac{5\delta}{4(2+\delta)}=\frac{1+3\delta}{4+2\delta}.
\end{gather*}
\end{proof}

So, we can improve inequality (\ref{2})
for all analytic functions of the form  (\ref{3})
 and all $h\in H,$ when
$\gamma(\varphi)<1/2.$ As it is noted above, this
inequality cannot be improved if $\gamma(\varphi)\geq1.$
 Can we improve inequality (\ref{2}) for all analytic functions of the form  (\ref{3})
 by condition $\gamma(\varphi)<1$?

Corollary \ref{c3} gives a positive answer to this question by some choice of the function~$ h(r).$
\begin{Corollary} \label{c3}
Let $f_t(z)$ be an analytic function of the form  (\ref{3}),
$h\in H\colon$ $\ln_2\mu_f(r)=o(\ln h(r)),$ $ r\to1-0,$ a sequence $(\theta_n)_{n\geq0}$ satisfy condition (\ref{13}),
where $\varphi\in\mathcal{L}$ and \begin{equation}\label{qvad}
\gamma(\varphi)=\varlimsup_{n\to+\infty}
\frac{1}{\ln n}\ln\frac{\theta_n}{\theta_{n+1}-\theta_n}\leq\delta\in[0,1).
\end{equation}
Then for all analytic functions $f_t$ there  exists a set $E(\delta,t)\subset(0,1)$ such that $h\mbox{-meas}(E(\delta,t))<+\infty$ and
 almost surely for  $t\in\mathbb{R}$
 $$
 \varlimsup_{\begin{substack} {r\to1-0 \\ r\notin E}\end{substack}}\Delta_{h}(r,f_t)\leq\frac{1+2\delta}{4+2\delta}.
 $$
 \end{Corollary}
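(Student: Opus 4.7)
The plan is to apply Theorem~\ref{t2} with the power-law choice $v(x)=x^{\alpha}$, $\alpha\in[0,1/4)$, in the same way as in the proof of Corollary~\ref{c2}, and to exploit the additional hypothesis $\ln_2\mu_f(r)=o(\ln h(r))$ in order to suppress the iterated-logarithm contributions in the optimization. Since $\ln_2\{h(r)\mu_f(r)\}\le\ln_2 h(r)+\ln_2\mu_f(r)+O(1)$, the hypothesis immediately gives $\ln_2\{h(r)\mu_f(r)\}=o(\ln h(r))$, hence
\[
2\ln h(r)+\ln_2\{h(r)\mu_f(r)\}=(2+o(1))\ln h(r),\qquad r\to 1-0,
\]
and every iterated-logarithm factor on the right-hand side of (\ref{17}) is absorbed into an $o(\ln h(r))$ remainder once one takes logarithms and divides by the denominator defining $\Delta_h(r,f_t)$.

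Under this simplification I would split the bound on $\ln M_f(r,t)-\ln\mu_f(r)$ into three contributions. First, the base prefactor $\sqrt{h(r)\ln h(r)}\,\mu_f(r)\ln^{1/4}\{h(r)\mu_f(r)\}\ln^{1+\varepsilon}\{\ln h(r)\ln\{h(r)\mu_f(r)\}\}$ contributes $\tfrac14+o(1)$ after division. Secondly, with $v(x)=x^{\alpha}$ the summand $v(8h^2(r)\ln\{h(r)\mu_f(r)\})$ has logarithm equal to $\alpha(2\ln h(r)+\ln_2\{h(r)\mu_f(r)\})+O(1)$, hence contributes $\alpha+o(1)$. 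Thirdly, the argument $X(r)$ of $\varphi^{1/2}$ in (\ref{17}) satisfies $\ln X(r)=(\tfrac32-2\alpha+o(1))\ln h(r)$ under the simplification, so $\gamma(\varphi)\le\delta$ gives $\ln\varphi^{1/2}(X(r))\le(\tfrac{3\delta}{4}-\delta\alpha+o(1))\ln h(r)$, contributing $\tfrac{3\delta}{8}-\tfrac{\delta\alpha}{2}+o(1)$.

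Equalizing the last two contributions via $\alpha=\tfrac{3\delta}{8}-\tfrac{\delta\alpha}{2}$ gives the optimal exponent $\alpha=\tfrac{3\delta}{4(2+\delta)}$, and the elementary equivalence $\tfrac{3\delta}{4(2+\delta)}<\tfrac14 \Leftrightarrow \delta<1$ shows that this choice is admissible in exactly the range prescribed by (\ref{qvad}). Adding the three pieces yields the claimed inequality
\[
\varlimsup_{\substack{r\to 1-0\\ r\notin E}}\Delta_h(r,f_t)\le\frac14+\alpha=\frac{1+2\delta}{4+2\delta}.
\]

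The main obstacle, and the only genuinely new ingredient compared with Corollary~\ref{c2}, is the verification that $\ln_2\mu_f(r)=o(\ln h(r))$ kills the $\ln_2\{h(r)\mu_f(r)\}$ coefficient inside $\ln\varphi^{1/2}(X(r))$; in Corollary~\ref{c2} that coefficient contributed an extra $\tfrac{5\delta}{8}-\tfrac{\delta\alpha}{2}$ in place of $\tfrac{3\delta}{8}-\tfrac{\delta\alpha}{2}$, which is precisely the gap between $\tfrac{1+3\delta}{4+2\delta}$ and $\tfrac{1+2\delta}{4+2\delta}$ and which forced the more restrictive range $\delta<1/2$. The remaining work consists of routine bookkeeping of the $o(\ln h(r))$ remainders already carried out in the proof of Theorem~\ref{t2}, and no new probabilistic input is required.
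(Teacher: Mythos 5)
Your proposal is correct and follows essentially the same route as the paper: apply Theorem~\ref{t2} with $v(x)=x^{\alpha}$, use $\ln_2\mu_f(r)=o(\ln h(r))$ (hence $\ln_2\{h(r)\mu_f(r)\}=o(\ln h(r))$) to collapse the denominator to $(2+o(1))\ln h(r)$ and to drop the $\ln_2$-coefficient that in Corollary~\ref{c2} forced $\tfrac{5\delta}{8}$ in place of $\tfrac{3\delta}{8}$, then equalize $\alpha=\tfrac{3\delta}{8}-\tfrac{\delta\alpha}{2}$ to get $\alpha=\tfrac{3\delta}{4(2+\delta)}$ and the bound $\tfrac14+\alpha=\tfrac{1+2\delta}{4+2\delta}$. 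The one phrase to tighten is ``adding the three pieces'': since (\ref{17}) has the form $(\text{base})\cdot\bigl(v(\cdot)+\varphi^{1/2}(\cdot)\bigr)$, the correct combination is the base exponent plus the \emph{maximum} of the two inner exponents, which after equalization is exactly $\tfrac14+\alpha$ as you state.
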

\begin{proof}[Proof.]
It follows from (\ref{tri}) that
\begin{gather*}
\ln \Biggl(\varphi^{1/2}\Biggl(
\frac{h^{3/2}(r)\ln^{5/4}\{h(r)\mu_f(r)\}\ln^{1+\varepsilon }_2\{h(r)\mu_f(r)\}}{v(h^2(r)\ln\{h(r)\mu_f(r)\})}\Biggl)\Biggl)\leq\\
\leq\Bigl(\frac{3\delta}{8}-\frac{\delta\alpha}{2}+o(1)\Bigl)
2\ln h(r)+
\Bigl(\frac{5\delta}{8}-\frac{\delta\alpha}{2}+o(1)\Bigl)
\ln_2\{h(r)\mu_f(r)\}\leq\\
\leq\Bigl(\frac{3\delta}{8}\!-\!\frac{\delta\alpha}{2}\!+\!o(1)\Bigl)
2\ln h(r)\!\leq\!
\Bigl(\frac{3\delta}{8}\!-\!\frac{\delta\alpha}{2}+o(1)\Bigl)
(2\ln h(r)\!+\!\ln_2\{h(r)\mu_f(r)\}).
\end{gather*}

From the equation $\alpha=\frac{3\delta}{8}-\frac{\delta\alpha}{2}$
we determine  $\alpha=\frac{3\delta}{4(2+\delta)}$ and
\begin{gather*}
 \varlimsup_{\begin{substack} {r\to1-0 \\ r\notin E}\end{substack}}\Delta_{h}(r,f_t)=\varlimsup_{\begin{substack} {r\to1-0 \\ r\notin E}\end{substack}}\frac{\ln M_f(r,t)-\ln\mu_f(r)}{2\ln h(r)+\ln_2\{h(r)\mu_f(r)\}}\leq\\
\leq \frac14+\alpha=\frac14+\frac{3\delta}{4(2+\delta)}=\frac{1+2\delta}{4+2\delta}.
\end{gather*}
\end{proof}

\section{Baire's categories  and Wiman-Valiron's \break type inequality  for analytic functions}

Let $h\in H$ and
$\theta=(\theta_n)_{n\geq 0}$ be a fixed sequence satisfying
condition \eqref{13}, such that $\gamma(\varphi)\leq\delta.$ Similarly to [11], we define the following sets
\begin{gather*}
F_{1h}(f,\theta, E)=\Big\{t\in\mathbb{R}\colon
\varlimsup_{\begin{substack} {r\to1-0 \\ r\notin E}\end{substack}}\Delta_h(r,f_t)\leq\frac{1+3\delta}{4+2\delta}\Big\}\\
F_{2h}(f,\theta)=\Big\{t\in\mathbb{R}\colon \Bigl(\forall\ \eta
>\frac{1+3\delta}{4+2\delta}\Bigl) \big[
h\mbox{-meas}(E(\eta, f_t, h))<+\infty\big]\Big\}\\
F_{3h}(f,\theta)=\Big\{t\in\mathbb{R}\colon
\varliminf_{r\to1-0}\Delta_h(r,f_t)\leq\frac{1+3\delta}{4+2\delta}\Big\},\\
F_{4h}(f,\theta)=\Big\{t\in\mathbb{R}\colon
\varliminf_{r\to1-0}\Delta_h(r,f_t)\leq\frac{1+2\delta}{4+2\delta}\Big\}.
\end{gather*}

By Corollary \ref{c2} we conclude that for analytic functions in
$\mathbb{D}$ there exists a set $E(f)$ of finite $h$-measure
such that the set $F_{1h}(f,\theta)$ is ``large'' in the sense of Lebesgue measure.
Therefore, we obtain some information on the sets $F_{2h}(f,\theta),\ F_{3h}(f,\theta).$

Similarly to \cite{filBai}, the following { question} arises naturally:
{\it does there exist a
 set $E=E(f)$ of the
finite $h$-measure such that the set $F_{1h}(f,\theta, E)$ is
residual in $\mathbb{R}$ for every analytic function $f$?}

We recall that a set
$B\subset\mathbb{R}$ is called residual in $\mathbb{R},$ if its
complement $\overline{B}=\mathbb{R}\backslash B$ is a set of the first
Baire category  in~$\mathbb{R}$. It is clear, that if the answer
to the question is affirmative, then the sets $F_{2h}(f,\theta),\
F_{3h}(f,\theta)$ are residual in~${\mathbb R}$. However for some analytic functions the set
$F_{1h}(f,\theta, E)$ is a set of  the first Baire category (see similar assertion for the entire function $f(z)=e^z$ in~\cite{filBai}). It
follows from the following theorem.

\begin{Theorem} \label{t3}
Let a sequence $(\theta_n)_{n\geq0}$ satisfy condition (\ref{4}),
$f(z)\!=\!\sum\limits_{n=0}^{+\infty}e^{n^{\varepsilon}}z^n\!,$ $\varepsilon \in(0,1),$ and $h(r)=(1-r)^{-1}.$
Then there exists a constant $C=C(\theta,\varepsilon )>0$ such that for all sequences
$(r_n)_{n\geq0}$ increasing to 1 the set $$
F_3=\Bigl\{t\in\mathbb{R}\colon
\varlimsup_{n\to+\infty}\frac{M_{f_t}(r_n)}
{h(r_n)\mu_f(r_n)\ln^{1/2}\{h(r_n)\mu_f(r_n)\}}\leq C\Bigl\}
$$
is a set of the first Baire category.
\end{Theorem}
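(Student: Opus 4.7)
The plan is a standard Baire-category reduction: exhibit $F_3$ as a countable union of closed, nowhere dense sets and invoke Baire's theorem.

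With $g(r):=h(r)\mu_f(r)\ln^{1/2}\{h(r)\mu_f(r)\}$, fix $C'>C$ (say $C'=2C$) and define $A_N:=\{t\in\mathbb{R}:M_{f_t}(r_n)\leq C'g(r_n)\text{ for all }n\geq N\}$. Then $F_3\subset\bigcup_N A_N$ directly from the definition of $\limsup$. Each $A_N$ is closed, since for fixed $r\in(0,1)$ the map $t\mapsto M_{f_t}(r)$ is continuous on $\mathbb{R}$ (the series for $f_t$ converges locally uniformly in $t$). It therefore suffices to show each $A_N$ has empty interior, which I would derive from a universal local lower bound: there exists $c_0=c_0(\theta,\varepsilon)>0$ such that for every nonempty open interval $I\subset\mathbb{R}$ and every $r$ sufficiently close to $1$,
$$\sup_{t\in I}M_{f_t}(r)\geq c_0\,g(r),$$
whereupon choosing $C:=c_0/3$ forces $C'<c_0$ and so $I\not\subset A_N$. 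The lower bound is plausible because all coefficients $a_k=e^{k^\varepsilon}$ are positive, so $M_f(r)=G_f(r)$ and $G_f(r)$ is asymptotically of order $g(r)$ as $r\to 1-0$ by the estimate from \cite{sul} recalled after~(\ref{2}); thus the aligned behaviour at $t=0$ must be recoverable inside every open interval in $t$.

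To produce aligned parameters $(t^*,\psi^*)\in I\times[0,2\pi]$ for each $n$, I would use a Diophantine argument in the spirit of \cite{filBai}. Let $\nu=\nu(r_n)$ denote the central index and let $W_n$ be the Laplace window of $L\sim\nu^{1-\varepsilon/2}$ indices around $\nu$ on which $a_kr_n^k\sim\mu_f(r_n)$. Consider the frequency map $\Phi_n\colon I\to[0,2\pi)^{|W_n|}$, $t\mapsto(\theta_k t\bmod 2\pi)_{k\in W_n}$. Under the lacunarity~(\ref{4}), its image is a smooth curve of length $\sim|I|q^{L/2}\theta_\nu$, which dwarfs the covering number $(\pi/\eta)^{|W_n|-1}$ of the torus by $\eta$-balls once $\nu\log q\gg\nu^{1-\varepsilon/2}\log(\pi/\eta)$, an inequality valid for large $n$ since $\varepsilon>0$. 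A quantitative Weyl-type equidistribution estimate, using the geometric separation of the $\theta_k$, then yields some $t^*\in I$ with $\|\theta_kt^*\|_{2\pi}<\eta$ for every $k\in W_n$; choosing $\psi^*$ to cancel the residual linear-in-$k$ phase makes all terms of $f_{t^*}(r_ne^{i\psi^*})$ indexed by $W_n$ lie in a common half-plane, contributing at least $(\cos\eta)\sum_{k\in W_n}a_kr_n^k$, which is comparable to $g(r_n)$. The tail $\sum_{k\notin W_n}$ is absorbed by a Markov truncation analogous to the one preceding~(\ref{7}).

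The principal obstacle is the quantitative equidistribution step. The curve-length comparison is only a heuristic, and a rigorous proof requires a discrepancy bound for $(\theta_kt)_{k\in W_n}$ on $I$ that is uniform in $n$ and uses only the lacunarity of~(\ref{4}); the analogous step for entire functions treated in \cite{filBai} should supply the required technique. Given this, Baire's theorem applied to $F_3\subset\bigcup_N A_N$ completes the proof.
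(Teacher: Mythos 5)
Your Baire-category scaffolding matches the paper's: writing $F_3$ as a countable union of closed sets, using the continuity of $t\mapsto M_{f_t}(r)$ for fixed $r$ to get closedness, and reducing the problem to a universal local lower bound $\sup_{t\in I}M_{f_t}(r)\geq c_0 g(r)$ that holds on every interval $I$ for $r$ close to $1$. (The paper uses a double-indexed family $F_{mk}$ with constants $C_m\uparrow C$ in place of your single $C'>C$, but the logic is the same.) The constant comparison step and the use of the Suleimanov-type two-sided bound to identify $G_f(r)\asymp g(r)$ also match inequality~(\ref{20}) in the paper.

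The genuine gap is the step you yourself flag: the quantitative equidistribution/discrepancy argument for $(\theta_k t\bmod 2\pi)_{k\in W_n}$ is only a heuristic and is where the entire difficulty of the theorem lives. The paper does not go this route at all. Instead it invokes Lemma~\ref{l6}, the Kahane--Weiss--Weiss lemma on lacunary trigonometric polynomials: if $\lambda_{n+1}/\lambda_n\geq q>1$ and $|I|\geq B(q)/\lambda_1$, then some $t_0\in I$ satisfies $\mathop{\rm Re}Q(t_0)\geq A(q)\sum|c_n|$. Applied to $Q(t)=\sum_{n=p}^s e^{n^\varepsilon}r_l^n e^{i\theta_n t}$ with $p$ chosen so that $|I|\geq B/\theta_p$, this gives precisely the local lower bound you need, without any alignment of phases to a common target and without Weyl-type discrepancy estimates. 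Note that the KWW conclusion is strictly weaker than what you are trying to prove: you attempt to make every $\theta_k t^*$ within $\eta$ of a prescribed value (simultaneous Diophantine approximation on a lacunary window), whereas KWW only demands that the real part be a positive fraction of the $\ell^1$ norm, which is achieved by an inductive choice of $t_0$ and is what makes the constant $A(q)$ explicit and $n$-uniform. Your own framework is then completed by the truncation arguments: the paper keeps only indices $p\leq n\leq s$, chooses $x_2$ and $s$ so the head $\sum_{n<p}$ and tail $\sum_{n>s}$ are each at most $\frac{A}{A+1}\delta G_f(r_l)$, and deduces $M_f(r_l,t_0)\geq A(1-2\delta)G_f(r_l)\geq C_m(\varepsilon)g(r_l)$, avoiding any analysis of the central index or Laplace window. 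To repair your proposal you should replace the equidistribution sketch with a direct appeal to the Kahane--Weiss--Weiss lemma from~\cite{kahww}; the reference you cite, \cite{filBai}, uses the same lemma for the entire-function analogue, so the technique you were reaching for is indeed available, just under that name rather than as a discrepancy bound.
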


We need the following lemma from \cite{kahww}.
\begin{Lemma}[\cite{kahww}] \label{l6}
For every $q>1$ there exist positive constants $A=A(q)$ and
$B=B(q)$ such that for each interval $I\subset{\mathbb R}$ and
every trigonometrical polynomial
$Q(t)=\sum_{n=1}^Nc_ne^{i\lambda_nt},$\
$0<\lambda_1<\lambda_2<\ldots <\lambda_N,$\ for which
$|I|\ge\frac{B}{\lambda_1}>0$ and
$\frac{\lambda_{n+1}}{\lambda_n}\ge q$, $1\le n\le N-1$, there
exists a point $t_0\in I$ such that
$$
\mathop{\rm Re}Q(t_0)\ge A\sum_{n=1}^N|c_n|.
$$
\end{Lemma}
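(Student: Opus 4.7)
The plan is to establish the pointwise lower bound by a duality argument: construct a nonnegative Borel measure $\mu$ supported on $I$ with $\mu(I)>0$ such that
$$
\mathop{\rm Re}\int_I Q(t)\,d\mu(t)\geq A\,\mu(I)\sum_{n=1}^N|c_n|,
$$
which by the mean-value principle immediately yields a point $t_0\in I$ with the desired estimate. So the whole burden is to produce such a $\mu$, with the constant $A=A(q)$ depending only on $q$, assuming $|I|\geq B(q)/\lambda_1$.

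For the test measure I would adopt Kahane's classical Riesz-product device. Writing $c_n=|c_n|e^{-i\phi_n}$, set, in the easier regime $q\geq 3$,
$$
K(t)=\prod_{n=1}^N\Bigl(1+\tfrac12\cos(\lambda_n t-\phi_n)\Bigr)\geq 0.
$$
When $q\geq 3$ the frequencies appearing in the expansion of $K$ are exactly the sums $\sum_n\varepsilon_n\lambda_n$ with $\varepsilon_n\in\{-1,0,1\}$, and lacunarity makes them all distinct; in particular the frequency $\lambda_n$ arises only from the $n$-th factor, with Fourier coefficient $\tfrac14 e^{-i\phi_n}$, so that the pairing against $c_n e^{i\lambda_n t}$ produces the ``diagonal'' contribution $\tfrac14|c_n|$. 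To localize to $I$, I then multiply $K$ by a nonnegative window $w$ supported on $I$ whose Fourier transform $\widehat{w}$ is real, nonnegative, supported in $(-\lambda_1/2,\lambda_1/2)$, and normalized so that $\widehat{w}(0)=|I|$ (a Fej\'er-type kernel). The hypothesis $|I|\geq B/\lambda_1$ is precisely what permits such a $w$ with bounded $L^1$-norm. Setting $d\mu=K(t)w(t)\,dt$, Parseval together with the frequency separation gives
$$
\mathop{\rm Re}\int Q\,d\mu=\tfrac14|I|\sum_{n=1}^N|c_n|+(\text{cross terms}),
$$
with the cross terms annihilated because every off-diagonal frequency $\xi=\sum_m\varepsilon_m\lambda_m\neq\lambda_n$ satisfies $|\lambda_n-\xi|\geq\lambda_1/2$, i.e.\ outside the support of $\widehat{w}$; since $\mu(I)=|I|$, this yields $A=\tfrac14$ for $q\geq 3$.

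For the general range $1<q<3$, I would perform a preliminary grouping step: split $\{1,\dots,N\}$ into $k=\lceil\log 3/\log q\rceil$ subsequences each with successive ratio $\geq 3$, and apply the construction above to whichever subsequence carries the largest partial sum $\sum|c_n|$; this loses a factor $1/k$ in $A$ and multiplies $B$ by at most $q^k$, both depending only on $q$. The main technical obstacle is the Fourier-bookkeeping in the diagonal/off-diagonal split: one must verify, using lacunarity at its full strength via the geometric series $\sum_{j\geq1}q^{-j}$, that every frequency of the form $\lambda_n-\sum_m\varepsilon_m\lambda_m$ distinct from $0$ has modulus $\geq\lambda_1/2$, so that the chosen window kills all cross-terms cleanly and the identity reduces to the diagonal sum. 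Quantifying this separation is exactly what fixes the dependence of $A(q)$ and $B(q)$ on $q$.
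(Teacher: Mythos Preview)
The paper does not give its own proof of this lemma; it is quoted verbatim from Kahane--Weiss--Weiss and used as a black box in the proof of Theorem~\ref{t3}. So there is nothing in the paper to compare your argument against. Your Riesz-product strategy is indeed the classical route to such lacunary lower bounds, and the broad architecture (build a nonnegative weight whose Fourier coefficient at each $-\lambda_n$ is $\tfrac14e^{i\phi_n}$, then average and extract a point) is the right one.

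There is, however, a genuine gap in the localization step. You ask for a nonnegative window $w$ \emph{supported on $I$} whose Fourier transform is \emph{supported in} $(-\lambda_1/2,\lambda_1/2)$. No nonzero function has this property: by Paley--Wiener, a function with compactly supported Fourier transform extends to an entire function of exponential type, hence cannot vanish on any open set, so it cannot be compactly supported in $t$. The ``Fej\'er-type'' object you have in mind is $w(t)=L\,\mathrm{sinc}^2(L t/2)$ (or a translate), whose transform is the triangle on $[-L,L]$; it is nonnegative and band-limited, but it is supported on all of $\mathbb R$, not on $I$. Consequently your measure $d\mu=Kw\,dt$ is not carried by $I$, and the mean-value step does not produce $t_0\in I$. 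Because $K$ can be as large as $(3/2)^N$, you also cannot simply discard the tail of $w$ outside $I$.

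The fix is to drop one of the two compact-support requirements and do the bookkeeping honestly. One clean option: take $w=\chi_I$, so that $\widehat w(\omega)=O(\min(|I|,|\omega|^{-1}))$, and estimate the off-diagonal contribution $\sum_{\xi\neq-\lambda_n}\widehat K(\xi)\,\widehat w(\lambda_n+\xi)$ directly. Here one must sum over \emph{all} Riesz-product frequencies $\xi=\sum_m\varepsilon_m\lambda_m$; the coefficients $|\widehat K(\xi)|=(1/4)^{\#\{m:\varepsilon_m\neq0\}}$ and the lacunary separation $|\lambda_n+\xi|\geq c(q)\lambda_1$ together make this sum $O(1)$, but this is work you have not done. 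Your final sentence (``Quantifying this separation is exactly what fixes the dependence of $A(q)$ and $B(q)$ on $q$'') correctly identifies where the labor lies, but the preceding claim that the cross terms are ``annihilated'' is false as written.
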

\begin{proof}[Proof of Theorem \ref{t3}.]
For the function $f(z)=\sum\limits_{n=1}^{+\infty}\exp\{n^{\varepsilon }\}z^n,$ $\varepsilon \in(0,1)$
(see [3]) there exists $C_0(\varepsilon )\in(0,1)$ such that we have
\begin{gather*}
C_0^{-1}(\varepsilon )\frac{\mu_f(r)}{1-r}
\geq\frac{ M_f(r)}{\sqrt{\ln M_f(r)}}
\geq C_0(\varepsilon )\frac{\mu_f(r)}{1-r}, \ r\to1-0.
\end{gather*}

Then we obtain as $r\to1-0$
\begin{gather}
\nonumber
\ln M_f(r)-\frac12\ln_2 M_f(r)\geq
\ln C_0(\varepsilon )+\ln\frac{\mu_f(r)}{1-r},\
\ln M_f(r)\geq\ln\frac{\mu_f(r)}{1-r},\\
\label{20}
 M_f(r)\geq C_0(\varepsilon )\frac{\mu_f(r)}{1-r}\ln^{1/2}\frac{\mu_f(r)}{1-r}.
\end{gather}

Let $(r_n)_{n\geq0}$ be some sequence  increasing to 1.
We put
$$
q=\inf\{\theta_{n+1}/\theta_n\colon n\geq0\}>1,
$$
$A=A(q)$ and $B=B(q)$ are the constants from Lemma~\ref{l6}, $C(\varepsilon )=AC_0(\varepsilon ).$
We consider a sequence of  numbers $(C_n(\varepsilon ))_{n\geq0}$
increasing to $C(\varepsilon )$. Define the set
\begin{gather*}
F_{mk}=\bigg\{\!t\in\mathbb{R}\colon\ (\forall l\geq k)\! \bigg[{M_{f_t}(r_l)}
\leq C_m(\varepsilon)
{\frac{\mu_f(r_l)}{1-r_l}\ln^{1/2}\frac{\mu_f(r_l)}{1-r_l}}\bigg]
  \bigg\}.
\end{gather*}
where the integers
$k\geq0,\ m\geq0$ are fixed.

For fixed $r\in(0,1)$ we consider the function
$$
\alpha(t,\varphi)=\Biggl|\sum_{n=0}^{+\infty} \exp\{i\theta_nt+n^{\varepsilon }+in\varphi\}r^n\Biggl|,
$$
which is continuous in $\mathbb{R}^2$ and periodic in the variables
$t$ and $\varphi.$
Then the function $\beta(t)=\max_{\varphi}\alpha(t,\varphi)=M_f(r,t)$
is continuous at every point $t\in\mathbb{R}.$
We remark, that the set $F_{mk}$ is closed in $\mathbb{R}.$

Now we prove that the set $\overline{F_{mk}}$ is everywhere dense.
Consider an arbitrary interval $I\subset\mathbb{R}, \ |I|>0$
and show that it contains some point $t_0$ from the set~$\overline{F_{mk}}.$

Let us choose $p\geq1, \delta>0$ such that
\begin{equation}\label{23}
|I|\ge\frac{B}{\theta_p},\ 1-2\delta>\sqrt{\frac{C_m(\varepsilon)}{C(\varepsilon)}}.
\end{equation}

Using (\ref{20}), we may define
\begin{gather*}
x_1\!=\!x_1(\varepsilon)\!=\!\inf\Biggl\{r\in(0,1)\colon
\sum_{n=0}^{+\infty}\exp\{n^{\varepsilon }\}r^n\!
\geq\!(1-2\delta)C_0(\varepsilon )
\frac{\mu_f(r)}{1-r}\ln^{1/2}\frac{\mu_f(r)}{1-r}\Biggl\},\\
x_2=x_2(\varepsilon )=\inf\Biggl\{r\in(0,1)\colon
\sum_{n=0}^{p}\exp\{n^{\varepsilon }\}r^n\leq\frac{A}{A+1}
\delta\sum_{n=0}^{+\infty}\exp\{n^{\varepsilon }\}r^n\Biggl\}.
\end{gather*}

Now choose  integers $l\geq k$ and $s>p$ such that the following inequalities
\begin{gather}\label{24}
r_l>\max\{x_1,x_2\},\ \sum_{n=s+1}^{+\infty}\exp\{n^{\varepsilon }\}r_l^n
\leq
\frac{A}{A+1}
\delta\sum_{n=0}^{+\infty}\exp\{n^{\varepsilon }\}r_l^n
\end{gather}
hold.

By Lemma \ref{l2} there exists a point $t_0$ in the interval $I$ such that
\begin{equation}\label{25}
 \mathop{\rm Re}\Biggl(\sum_{n=p}^se^{i\theta_n t_0+n^{\varepsilon }}r_l^n\Biggl)\geq
 A\sum_{n=p}^s\exp\{n^{\varepsilon }\}r_l^n.
 \end{equation}
Using the definitions of $x_1, x_2$ from (\ref{20})--(\ref{25}) we deduce
\begin{gather*}
M_f(r_l,t_0)=\max_{\varphi}|f_{t_0}(r_le^{i\varphi})|\geq\\
\geq|f_{t_0}(r_l)|\geq\mathop{\rm Re}f_{t_0}(r_l)
\geq\mathop{\rm Re}\Biggl(\sum_{n=p}^s\exp\{i\theta_n t_0+n^{\varepsilon }\}r_l^n\Biggl)-\\
-\sum_{n\notin[p,s]}\exp\{n^{\varepsilon }\}r_l^n
\geq A\sum_{n=p}^s\exp\{n^{\varepsilon }\}r_l^n-
\sum_{n\notin[p,s]}\exp\{n^{\varepsilon }\}r_l^n=\\
=A\sum_{n=0}^{+\infty}\exp\{n^{\varepsilon }\}r_l^n
-(1+A)\sum_{n\notin[p,s]}\exp\{n^{\varepsilon }\}r_l^n\geq\\
\geq A\sum_{n=0}^{+\infty}\exp\{n^{\varepsilon }\}r_l^n
-(1+A)\frac{2A}{1+A}\delta\sum_{n=0}^{+\infty}\exp\{n^{\varepsilon }\}r_l^n=\\
=A(1-2\delta)\sum_{n=0}^{+\infty}\exp\{n^{\varepsilon }\}r_l^n
\geq\frac{C(\varepsilon )}{C_0(\varepsilon )}(1-2\delta)^2C_0(\varepsilon )\frac{\mu_f(r_l)}{1-r_l}\ln^{1/2}\frac{\mu_f(r_l)}{1-r_l}\geq\\
\geq C_m(\varepsilon )\frac{\mu_f(r_l)}{1-r_l}\ln^{1/2}\frac{\mu_f(r_l)}{1-r_l}.
\end{gather*}

Therefore, $t_0\in\overline{F_{mk}}.$

Since the set $F_{mk}$ is closed in $\mathbb{R}$ and its complement
$\overline{F_{mk}}$ is everywhere dense,  the set $F_{mk}$ is nowhere dense.
Hence
$$
F_3=\bigcup^{+\infty}_{m=0}\bigcup^{+\infty}_{k=0}F_{mk}
$$
is a set of the first Baire category.
\end{proof}
\begin{Theorem} \label{t4}
If a sequence $(\theta_n)_{n\geq0}$ satisfies condition (\ref{13}) and $h(r)=\frac1{1-r}$,
then for every analytic function $f$ the
set $F_{3h}(f,\theta)$ is residual in $\mathbb{R}.$
\end{Theorem}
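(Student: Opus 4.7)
The plan is to show that $\mathbb{R}\setminus F_{3h}(f,\theta)$ is meager, by writing it as a countable union of closed, nowhere-dense sets. Set $c:=(1+3\delta)/(4+2\delta)$. If $t\notin F_{3h}(f,\theta)$ then $\varliminf_{r\to 1-0}\Delta_h(r,f_t)>c$, so there exist $m\in\mathbb{N}$ and $r_0\in\mathbb{Q}\cap(0,1)$ with $\Delta_h(r,f_t)\ge c+1/m$ for every $r\in(r_0,1)$. Hence
$$
\mathbb{R}\setminus F_{3h}(f,\theta)\;=\;\bigcup_{m=1}^{\infty}\;\bigcup_{r_0\in\mathbb{Q}\cap(0,1)} G_{m,r_0},\qquad G_{m,r_0}:=\bigcap_{r\in(r_0,1)}\Bigl\{t\in\mathbb{R}:\Delta_h(r,f_t)\ge c+\tfrac{1}{m}\Bigr\}.
$$

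Next, I would verify that each $G_{m,r_0}$ is closed. For fixed $r\in(0,1)$, the map $t\mapsto M_f(r,t)=\max_{\psi\in[0,2\pi]}\bigl|\sum_{n} a_n r^n e^{in\psi}e^{i\theta_n t}\bigr|$ is continuous (uniform convergence of the series in $(t,\psi)$ on compact sets, plus continuity of the maximum of a jointly continuous function over the compact $\psi$-parameter set). Thus $\Delta_h(r,f_t)$ is continuous in $t$, each level set $\{t:\Delta_h(r,f_t)\ge c+1/m\}$ is closed, and any intersection of closed sets is closed.

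The main step is emptiness of interior. Suppose, toward contradiction, that $G_{m,r_0}$ contains a non-empty open interval $I$. Because $(\theta_n)$ satisfies \eqref{13} with $\gamma(\varphi)\le\delta\in[0,1/2)$, Corollary~\ref{c2} yields a Lebesgue-null set $N\subset\mathbb{R}$ such that for each $t\in\mathbb{R}\setminus N$ there is a set $E(\delta,t)\subset(0,1)$ with $h\mbox{-meas}(E(\delta,t))<+\infty$ and $\varlimsup_{r\to 1-0,\,r\notin E(\delta,t)}\Delta_h(r,f_t)\le c$. Since $|I|>0$, pick $t\in I\setminus N$. Because $h\mbox{-meas}(E(\delta,t))<+\infty$ while $h\mbox{-meas}((r_0,1))=+\infty$, the set $(r_0,1)\setminus E(\delta,t)$ contains a sequence $r_n\to 1-0$. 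Membership $t\in G_{m,r_0}$ gives $\Delta_h(r_n,f_t)\ge c+1/m$ for every $n$, whence $\varlimsup_{r\to 1-0,\,r\notin E(\delta,t)}\Delta_h(r,f_t)\ge c+1/m>c$, contradicting Corollary~\ref{c2}.

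Consequently each $G_{m,r_0}$ is nowhere dense, the complement of $F_{3h}(f,\theta)$ is of first Baire category, and $F_{3h}(f,\theta)$ is residual in $\mathbb{R}$. The main conceptual obstacle is coordinating two different notions of ``genericity'' in $t$ (the Lebesgue-almost-everywhere bound of Corollary~\ref{c2} versus topological residuality); the decisive bridge is the defining property $\int_{r_0}^{1}h(r)\,dr=+\infty$ of the class $H$, which prevents the $t$-dependent exceptional set $E(\delta,t)$ from engulfing a left neighbourhood of $1$ and so lets the almost-everywhere limsup bound be transferred into the Baire setting via a positive-measure interval inside a hypothetical interior point of $G_{m,r_0}$.
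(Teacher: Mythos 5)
Your proof is correct and follows essentially the same approach as the paper: write the complement of $F_{3h}(f,\theta)$ as a countable union of sets that are closed (via continuity of $t\mapsto M_f(r,t)$) and nowhere dense (via Corollary~\ref{c2} together with $h\mbox{-meas}((r_0,1))=+\infty$). Your parametrization ($c+1/m$ and rational $r_0$) differs only cosmetically from the paper's $(c_m\downarrow c$, $r>1-\tfrac1{k+1})$, and you spell out the nowhere-density argument slightly more explicitly than the paper does.
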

\begin{proof}[Proof.]
Let $f$ be an arbitrary analytic function in $\mathbb{D}.$
We consider the sequence $(c_n)_{n\geq0}$ such that
$$
c_n\downarrow\frac{1+3\delta}{4+2\delta}, \ n\to+\infty.
$$

Fix integers $m\geq0,\ k\geq0$ and define the set
$$
G_{mk}=\Biggl\{t\in\mathbb{R}\colon
M_f(r,t)\geq\frac{\mu_f(r)}{(1-r)^{c_m}}\ln^{c_m}\frac{\mu_f(r)}{1-r},\ \forall r>1-\frac1{k+1}\Biggl\}.
$$

As it has been proved above, for every fixed $r\in(0,1)$
the function $\beta(t)=M_f(r,t)$ is continuous at every point $t_0\in\mathbb{R}.$
Then  the set $G_{mk}$ is closed in $\mathbb{R}.$
By Corollary \ref{c2} the set $\overline{G_{mk}}$ is everywhere dense.
Therefore, $G_{mk}$ is nowhere dense and
$$
G=\bigcup^{+\infty}_{m=0}\bigcup^{+\infty}_{k=0}G_{mk}
$$
is a set of the first Baire category. So,
$F_{3h}(f,\theta)=\overline{G}$ is residual  in $\mathbb{R}.$
\end{proof}
\begin{Theorem} \label{t5}
If a sequence $(\theta_n)_{n\geq0}$ satisfies condition (\ref{13}) and $h(r)=\frac1{1-r}$,
then for all analytic functions $f$ such that $\ln_2\mu_f(r)=o(\ln(1-r)),$ $r\to1-0$,
the set $F_{4h}(f,\theta)$ is residual in $\mathbb{R}.$
\end{Theorem}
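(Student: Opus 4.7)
My plan is to adapt the proof of Theorem~\ref{t4} essentially verbatim, with two changes: the target constant becomes $\frac{1+2\delta}{4+2\delta}$ (the bound supplied by Corollary~\ref{c3}) in place of $\frac{1+3\delta}{4+2\delta}$, and one invokes Corollary~\ref{c3} where Theorem~\ref{t4} invoked Corollary~\ref{c2}. The extra hypothesis $\ln_2\mu_f(r)=o(\ln(1-r))$ present in Theorem~\ref{t5} is exactly the condition $\ln_2\mu_f(r)=o(\ln h(r))$ required by Corollary~\ref{c3}, since $h(r)=1/(1-r)$.

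Concretely, fix an analytic function $f$ satisfying this hypothesis and choose a sequence $(c_m)_{m\geq 0}$ strictly decreasing to $\frac{1+2\delta}{4+2\delta}$. For nonnegative integers $m,k$ define
$$
G_{mk}=\Bigl\{t\in\mathbb{R}\colon \Delta_h(r,f_t)\geq c_m\text{ for all }r>1-\tfrac{1}{k+1}\Bigr\}.
$$
Unwinding the definition of $F_{4h}(f,\theta)$, we see that $t\notin F_{4h}(f,\theta)$ iff $\varliminf_{r\to 1-0}\Delta_h(r,f_t)>\frac{1+2\delta}{4+2\delta}$, which happens iff $t\in G_{mk}$ for some $m,k$. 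Hence $\mathbb{R}\setminus F_{4h}(f,\theta)=G:=\bigcup_{m,k}G_{mk}$, and it suffices to show that each $G_{mk}$ is nowhere dense.

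Closedness of $G_{mk}$ follows from the continuity of $t\mapsto M_f(r,t)$ for each fixed $r$; this continuity was already established in the proof of Theorem~\ref{t3} by viewing $M_f(r,t)$ as the supremum over $\varphi$ of a continuous and doubly periodic function $\alpha(t,\varphi)$. To see that $G_{mk}$ has empty interior, I invoke Corollary~\ref{c3}: under the hypothesis on $\mu_f$, for almost every $t\in\mathbb{R}$ (Lebesgue) one has $\varliminf_{r\to 1-0}\Delta_h(r,f_t)\leq\frac{1+2\delta}{4+2\delta}<c_m$. The set of such ``good'' $t$ has full Lebesgue measure, hence meets every nonempty open interval; any such point $t$ satisfies $\Delta_h(r,f_t)<c_m$ for a sequence of $r$'s tending to~$1$, contradicting the defining condition of $G_{mk}$. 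Thus $G_{mk}$ is closed with empty interior, hence nowhere dense, so $G$ is of the first Baire category and $F_{4h}(f,\theta)=\mathbb{R}\setminus G$ is residual.

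There is no real obstacle: the argument is a direct categorical repackaging of Corollary~\ref{c3}, in exact parallel with Theorem~\ref{t4}. The only point to check carefully is that the exceptional set from Corollary~\ref{c3} is Lebesgue-null and therefore cannot contain any open interval, which is what forces the complement of $G_{mk}$ to be dense; this is immediate.
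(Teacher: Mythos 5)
Your proof is correct and follows exactly the route the paper intends: the paper states Theorem~\ref{t5} without proof precisely because it is the verbatim analogue of Theorem~\ref{t4} with Corollary~\ref{c3} (and the constant $\frac{1+2\delta}{4+2\delta}$) substituted for Corollary~\ref{c2} (and $\frac{1+3\delta}{4+2\delta}$); the hypothesis $\ln_2\mu_f(r)=o(\ln(1-r))$ is indeed what makes Corollary~\ref{c3} applicable with $h(r)=(1-r)^{-1}$. Your unwinding of $\mathbb{R}\setminus F_{4h}(f,\theta)=\bigcup_{m,k}G_{mk}$ via the decreasing sequence $(c_m)$, the closedness of each $G_{mk}$ from the continuity of $t\mapsto M_f(r,t)$, and the density argument from the almost-everywhere bound all match what the paper's omitted proof would be.
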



\end{document}